\newcommand{\redd}[1]{{\color{black} #1}}
\DeclarePairedDelimiterX\set[1]\lbrace\rbrace{#1}
\newtheorem{theorem}{Theorem}
\newtheorem{proposition}[theorem]{Proposition}
\newtheorem{lemma}[theorem]{Lemma}
\newtheorem{corollary}[theorem]{Corollary}
\newtheorem{remark}[theorem]{Remark}
\theoremstyle{definition}
\newtheorem{definition}[theorem]{Definition}
\author{Neil Pritchard}
\author{Thomas Weighill}
\address{University of North Carolina at Greensboro, Greensboro NC, USA}
\begin{document}

\title{Coarse embeddability of Wasserstein Space and the Space of Persistence Diagrams}

\begin{abstract}
    We prove an equivalence between open questions about the embeddability of the space of persistence diagrams and the space of probability distributions (i.e.~Wasserstein space).  It is known that for many natural metrics, no coarse embedding of either of these two spaces into Hilbert space exists. Some cases remain open, however. In particular, whether coarse embeddings exist with respect to the $p$-Wasserstein distance for $1\leq p\leq 2$ remains an open question for the space of persistence diagrams and for Wasserstein space on the plane. In this paper, we show that embeddability for persistence diagrams \redd{is equivalent to} embeddability for Wasserstein space on $\mathbb{R}^2$.  \redd{When $p > 1$,  Wasserstein space on $\mathbb{R}^2$ is snowflake universal (an obstruction to embeddability into any Banach space of non-trivial type) if and only if the space of persistence diagrams is snowflake universal.}
\end{abstract}

\maketitle

\section{Introduction}

In this paper, we consider embeddings of two kinds of non-linear data: persistence diagrams, and probability distributions. A persistence diagram is an unordered set of points in the plane which arises as a summary of the topological information in a dataset (e.g.~a point cloud or a grayscale image). Persistence diagrams have proven to capture important information in applications involving image data~\cite{chung2020smooth}, geospatial data~\cite{feng2021persistent}, time series data~\cite{seversky2016time}, and more. The set of persistence diagrams is endowed with a family of natural metrics called Wasserstein distances (see Section \ref{sec:prelim}). In practice, the analysis of persistence diagrams is hampered by the fact that the space of persistence diagrams is not readily identifiable with a subset of Euclidean space, which limits the use of classical machine learning and statistical techniques. Hence, many vectorizations (i.e.~maps from the set of persistence diagrams to a Hilbert space) have been introduced in recent years. Examples of vectorizations for persistence diagrams include persistence landscapes~\cite{bubenik2015statistical}, persistence images~\cite{adams2017persistence} and persistence curves~\cite{chung2022persistence}. Ideally, one would like these maps to be isometric embeddings; unfortunately, it can be shown theoretically that no isometric embedding of the space of persistence diagrams into Hilbert space exists. Even worse, even if one relaxes the isometric condition -- say, to that of a coarse embedding -- such an embedding is still theoretically proven not to exist in most cases (see Section \ref{sec:previous} for a survey of such results). 

This paper also studies probability distributions as objects. The space of all (Borel, finite moment) probability distributions on $\mathbb{R}^n$ is equipped with a family of metrics, also called Wasserstein distances; we refer to this space as Wasserstein space on $\mathbb{R}^n$. Optimal transport and Wasserstein distances between distributions have been applied in a variety of areas including economics, machine learning, computer graphics and fluid dynamics. Like persistence diagrams, probability distributions, with Wasserstein metrics, are also difficult to embed into Euclidean space (see Section~\ref{sec:previous}). 

Despite the large number of negative results regarding embeddings, some important cases remain open. In the case of persistence diagrams, it is not known whether the set of persistence diagrams with the $p$-Wasserstein metric coarsely embeds into Hilbert space for $1 \leq p \leq 2$. In the case of probability distributions, it is not known whether the set of probability distributions on $\mathbb{R}^2$ with the $p$-Wasserstein metric coarsely embeds into Hilbert space for the same range of $p$ values. These spaces are somewhat similar in that persistence diagrams can be thought of as discrete distributions, and one might expect that the answers to these two open questions should be related. In this paper, we confirm this by leveraging a result of Nowak on coarse embeddings of finite subsets~\cite{nowak2005coarse}. In particular, we show that all finite sets of distributions with the $p$-Wasserstein metric uniformly coarsely embed into the space of persistence diagrams with the $p$-Wasserstein metric (Proposition \ref{eqi}), and, \redd{conversely, that finite sets of persistence diagrams embed into Wasserstein space (Proposition~\ref{eqipd} and \ref{equipdbi}). As a corollary, we obtain our main result: that the space of persistence diagrams with the $p$-Wasserstein metric embeds into Hilbert space if and only if the same is true of Wasserstein space on $\mathbb{R}^2$ with the $p$-Wasserstein metric.}

\redd{An important obstruction to coarse embeddability into any Banach space of non-trivial type is snowflake universality, which was studied for Wasserstein space on $\mathbb{R}^3$ in \cite{andoni2018snowflake}. Whether Wasserstein space on $\mathbb{R}^2$ is snowflake universal is still an open question. Using a similar argument involving finite subsets, we prove that for $p > 1$, the space of persistence diagrams with the $p$-Wasserstein metric is $\frac{1}{p}$-snowflake universal if and only if the same is true of Wasserstein space on $\mathbb{R}^2$ with the $p$-Wasserstein metric.}



\section{Preliminaries}\label{sec:prelim}

\subsection{Wasserstein Space} 
We recall the following basic notions in optimal transport from \cite{peyre2019computational}.

\begin{definition}
Let $(X,d_X)$ be a complete separable metric space and let $\mathcal{P}_p(X)$ denote the space of all Borel probability measures on $X$ with finite $p$-th moments, for $1 \leq p < \infty$. The \textbf{$p$-Wasserstein distance} between $\alpha,\beta \in \mathcal{P}_p(X)$ is given by
\begin{align*}
    (W_p(\alpha, \beta))^p = \text{inf}_{\gamma \in \mathcal{U}(\alpha,\beta)}\int d(x,y)^pd\gamma(x,y),
\end{align*}
where $\mathcal{U}(\alpha,\beta)$ is the set of Borel probability measures on $X^2$ with marginals $\alpha$ and $\beta$. The metric space $(\mathcal{P}_p(X),W_p)$, which we will usually denote simply as $\mathcal{P}_p(X)$, is called the\textbf{ Wasserstein $p$-space} over $(X,d_X)$.
\end{definition}

While the $p$-Wasserstein distance is defined for general measures, in this paper we will be particularly focused on discrete measures with finite support. In this case there is an equivalent formulation of the distance which will be useful. Let $\alpha = \sum_{j=1}^ma_j\delta_{x_j}$ and $\beta = \sum_{j=1}^{n}b_j\delta_{y_j}$ be discrete measures. The above definition reduces to
\begin{align*}
    (W_p(\alpha, \beta))^p = \text{min}_{P\in U(a,b)}\sum_{i,j}d(x_i,x_j)^pP_{ij}.
\end{align*}
Here $U(a,b)$ is the set of $n\times m$ matrices such that $P\mathds{1}_m = a$ and $P^T\mathds{1}_n = b$. Moreover, if one additionally assumes that $\alpha$ and $\beta$ have rational coefficients then by rewriting $\alpha = \sum_{j=1}^N\frac{1}{N}\delta_{x_j'}$ and $\beta  = \sum_{j=1}^{N}\frac{1}{N}\delta_{y_j'}$ it follows from the Birkhoff-von Neumann theorem that the distance can be re-expressed as
\begin{align}\label{Distance}
    (W_p(\alpha,\beta))^p = \text{min}_{\sigma \in P(N)}\frac{1}{N}\sum_{j = 1}^N d(x_j',y_{\sigma(j)}')^p 
\end{align}
where $P(N)$ is the set of permutations on $\{1, \ldots, N\}$ \cite{peyre2019computational}. 


\subsection{The Space of Persistence Diagrams}
Persistence diagrams typically appear in topological data analysis as a way to store topological information from a sequence of complexes. 
\begin{definition}
Denote by $\mathbb{R}^2_{<}$ the set $\{(b,d)\in \mathbb{R}^2\mid b<d\}$. A \textbf{persistence diagram} is finite multiset $D \subseteq \mathbb{R}^2_{<}$.
\end{definition}

Note that other definitions exist in the literature which allow, for example, countable multisets, or which for notational convenience define a persistence diagram to include infinitely many copies of the diagonal (e.g.~\cite{otter2017roadmap}). We work with the above definition as it is the most convenient context for our results. Persistence diagrams are often also allowed to include points of the form $(b, \infty)$. In this paper we do not allow infinite values, as we want all our distances to be finite. In applications, infinite values are often removed or replaced for the same reason. We now define Wasserstein distances between persistence diagrams.

\begin{definition}
A \textbf{partial matching} between two persistence diagrams $D_1$ and $D_2$ is a triple $(B_1,B_2,f)$ where $B_1 \subseteq D_1$, $B_2\subseteq D_2$ and $f\colon B_1\to B_2$ is a bijection.
\end{definition}

\begin{definition}
    For $1\leq p<\infty$ The \textbf{$p$-cost} of a partial matching $(B_1,B_2,f)$ between diagrams $D_1$ and $D_2$ is defined as,
\begin{align*}
    \text{cost}_p(f) &= \left(\sum_{(b,d) \in B_1} d((b,d),f((b,d)))^p\right. \vphantom) + 
    \sum_{(b,d)\in D_1\setminus B_1} d((b,d),\Delta_{\mathbb{R}})^p\\
    &+\left. \vphantom( \sum_{(b,d)\in D_2\setminus B_2} d((b,d),\Delta_{\mathbb{R}})^p\right)^{\frac{1}{p}}
\end{align*}
where $\Delta_{\mathbb{R}} = \{(x,x) \in \mathbb{R}^2\}$ is the diagonal. 
\end{definition}

The distance $d$ in the above definition is a distance between points in the plane. Common choices are an $\ell^q$ distance, for $q\geq 1$, or $\ell^\infty$ distance. Since all $\ell^q$ distances on $\mathbb{R}^2$ are bi-Lipschitz equivalent, our results do not depend on the particular choice of $d$; when necessary we assume the $\ell^\infty$ distance.

Now, for two persistence diagrams $D_1,D_2$ we define the distance function 
\begin{align*}
    W_p(D_1,D_2) = \inf\{\text{cost}_p(f)\mid \text{$f$ is a partial matching between $D_1$ and $D_2$}\}.
\end{align*}
Note that  $W_p$ is a metric, which we call the $p$-\textbf{Wasserstein metric on persistence diagrams}. When $p = \infty$, one can make the natural replacements of sums by suprema to obtain the bottleneck distance $W_\infty$.

\begin{definition}
    Let $\mathcal{D}$ denote the collection of persistence diagrams. The metric space $(\mathcal{D},W_p)$ is called the \textbf{space of persistence diagrams with the $p$-Wasserstein distance}. 
\end{definition}

 We use $W_p$ to denote both the Wasserstein distance between probability distributions and the Wasserstein distance between persistence diagrams; the meaning will always be clear from the arguments.  We adopt the convention that $\mathbb{R}^2$ is always quipped with the $\ell^\infty$ distance; other $\ell^q$ distances are bi-Lipschitz equivalent so we do not lose any generality with this choice. 

 \begin{remark}
     To each persistence diagram $D$ we can naturally associate the corresponding empirical distribution in the plane, i.e.~the uniform distribution on the points (possibly with multiplicity) in $D$. Note that the Wasserstein distance between two persistence diagrams is not the same as the Wasserstein distance between the corresponding empirical distributions as a result of partial matchings and the use of the diagonal as a universal point to match to. 
 \end{remark}

\subsection{Embeddings}
A metric embedding is a map between metric spaces which preserves distances in some sense. We will consider a number of different types of embeddings. 

\begin{definition}
    Let $(X,d_X)$ and $(Y,d_Y)$ be metric spaces and let $f\colon X\to Y$ be a map. We say that $f$ is
    \begin{itemize}
        \item an $\textbf{isometric embedding}$ if $\forall_{x,y \in X} \ d(x,y) = d(f(x),f(y))$;
        \item a \textbf{bi-Lipschitz embedding} if there exists a constant $A\geq 1$ such that 
        $$\forall_{x,y \in X}\ A^{-1}d(x,y)\leq d(f(x),f(y))\leq Ad(x,y),$$
        \item \textbf{$\epsilon$-quasi-isometric embedding} if there exists an $\epsilon>0$ such that 
        $$\forall_{x,y \in X}\ d(x,y)-\epsilon \leq d(f(x),f(y))\leq d(x,y)+ \epsilon, $$
        \item a \textbf{coarse embedding} if there exist non-decreasing functions $\rho_1,\rho_2\colon [0,\infty)\to [0,\infty)$ satisfying $\lim_{t\to \infty} \rho_1(t)= +\infty$ and 
        $$ \forall_{x,y \in X} \ \rho_1(d(x,y)) \leq d(f(x),f(y)) \leq \rho_2(d(x,y))$$
    \end{itemize}  
\end{definition}

Suppose now that $D\in [1,\infty)$, then $f$ is said to have \textbf{distortion at most $D$} if there exists an $s\in (0,\infty)$ such that 

$$\forall_{x,y \in X} \ \quad sd(x,y)\leq d(f(x),f(y))\leq Dsd(x,y).$$

Moreover, the distortion of $f$, denoted $\textbf{dist}(f)$, is the infimum over all $D$ such that the inequality above holds. If a map $f\colon X\to Y$ with distortion $D$ exists, then $X$ is said to embed into $Y$ with distortion $D$. We will use the notation 
$$c_Y(X) = \inf_{f\colon X\to Y} \text{dist}(f).$$

If $\theta \in (0,1]$, then the \textbf{$\theta$-snowflake} of a metric space $(X,d)$ is the metric space $(X, d^\theta)$, where the metric is the obtained by raising $d$ to the $\theta$ power. Following \cite{andoni2018snowflake}, a metric space $X$ is said to be \textbf{$\theta$-snowflake universal} if for every finite metric space $(Y,d_Y)$, $c_X(Y,d_Y^{\theta}) = 1$.

\section{Past results on embeddings}\label{sec:previous}

In this section we will give an overview of some embedding results for $p$-Wasserstein space and the space of persistence diagrams. In general, most of these spaces do not embed into Hilbert or Euclidean spaces except under severe restrictions. 

\begin{theorem}[Turner et al., 2014~\cite{turner2014frechet}]\label{thmiso}
$(\mathcal{D},W_p)$ does not admit an isometric embedding into Hilbert space for any $1\leq p\leq\infty$
\end{theorem}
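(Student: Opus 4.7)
The plan is to apply Schoenberg's theorem: a separable metric space $(X, d)$ admits an isometric embedding into some Hilbert space if and only if $d^2$ is conditionally negative definite, i.e., for every finite subset $\{x_1, \ldots, x_n\} \subseteq X$ and every real vector $(\alpha_1, \ldots, \alpha_n)$ with $\sum_i \alpha_i = 0$ one has $\sum_{i,j} \alpha_i \alpha_j \, d(x_i, x_j)^2 \leq 0$. It therefore suffices to exhibit, for each $p \in [1, \infty]$, a finite subfamily of $(\mathcal{D}, W_p)$ violating this inequality.

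For $1 \leq p < 2$, a four-diagram configuration suffices. I would take $D_1 = \emptyset$, $D_2 = \{z_1\}$, $D_3 = \{z_2\}$, $D_4 = \{z_1, z_2\}$, where $z_1, z_2 \in \mathbb{R}^2_<$ are equidistant (at distance $a$) from the diagonal and separated by distance $c \gg a$. A direct case analysis of partial matchings shows that four of the pairwise distances equal $a$ while the two ``diagonal'' pairs satisfy $W_p(D_1, D_4) = W_p(D_2, D_3) = 2^{1/p} a$. Applying Schoenberg with $(\alpha_1, \alpha_2, \alpha_3, \alpha_4) = (1, -1, -1, 1)$ then yields $\sum_{i,j} \alpha_i \alpha_j W_p(D_i, D_j)^2 = 2a^2 (2^{1 + 2/p} - 4)$, strictly positive exactly for $p < 2$.

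For $2 \leq p \leq \infty$ the four-cycle computation becomes tight or reverses, so I would move to larger finite subsets. Using diagrams consisting of many widely spaced points kept far from the diagonal, the partial matching structure collapses to a forced point-to-point matching, so the induced metric on the chosen subfamily is isometric (up to scaling) to a finite subset of $\ell^p$ for finite $p$, or of $\ell^\infty$ for $p = \infty$. Since $\ell^p$ with $p \neq 2$ and $\ell^\infty$ both admit finite subsets violating Schoenberg's condition, this yields the obstruction when $p > 2$. The borderline case $p = 2$ is the most delicate, since the above strategy only produces $\ell^2$-like subspaces, which do embed in Hilbert; it would require a specialized configuration exploiting the branching of Wasserstein geodesics, for instance one comparing a diagram with multiple points to singletons whose mass ``splits'' geodesically, realizing a known non-Hilbert-embeddable finite metric.

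The main obstacle will be certifying that the partial matchings behave as intended: for each configuration one must check that the candidate optimal matching is genuinely optimal, so that the induced metric on the finite subfamily is as claimed. This is controlled by making spatial separations large relative to heights above the diagonal. A secondary, more conceptual challenge is finding a single uniform construction valid for all $p \in [1, \infty]$; such a unified argument would be more elegant than the case split above, but is not needed for the theorem as stated.
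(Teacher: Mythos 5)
The paper offers no proof of this statement---it is quoted from Turner et al.~\cite{turner2014frechet}---so your proposal must stand on its own. The Schoenberg strategy (isometric embeddability into Hilbert space is equivalent to $d^2$ being conditionally negative definite on finite subsets) is the right one, and your four-diagram configuration for $1\leq p<2$ is correct: with $c\gg a$ the optimal matchings are as claimed, the six distances are $a,a,a,a,2^{1/p}a,2^{1/p}a$, and the form with weights $(1,-1,-1,1)$ evaluates to $2a^2(2^{1+2/p}-4)$, which is positive exactly when $p<2$. The reductions for $2<p\le\infty$ to finite subsets of $\ell_p$ and $\ell_\infty$ are sound in outline (you should cite the classical fact that $\ell_p$ for $p>2$ fails to be of $2$-negative type, since you do not exhibit a witness), and in any case those ranges are subsumed by the far stronger Theorems~\ref{waginf} and~\ref{wagp}.

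The genuine gap is $p=2$, which is precisely the case not covered by any other result, and you acknowledge but do not close it. Your four-point family is exactly degenerate there: with $c\gg a$ the form equals $2a^2(2^2-4)=0$, and if one instead shrinks $c$ so the direct matching $z_1\mapsto z_2$ is optimal, the form becomes $2(c^2-2a^2)$ subject to $c\le\sqrt2\,a$, again never positive. The standard way to finish---and the concrete content of your phrase ``branching of Wasserstein geodesics''---is to take $\|z_1-z_2\|=\sqrt2\,a$ exactly, so that $D_2=\{z_1\}$ and $D_3=\{z_2\}$ admit two distinct optimal matchings (direct, and both-points-to-diagonal) and hence two distinct midpoints: $M_1=\{(z_1+z_2)/2\}$ and $M_2=\{m_1,m_2\}$ with $m_i$ halfway from $z_i$ to its diagonal projection. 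One checks each is at distance $\tfrac12 W_2(D_2,D_3)$ from both $D_2$ and $D_3$, yet $M_1\ne M_2$; since metric midpoints in Hilbert space are unique by the parallelogram law, the four-point set $\{D_2,D_3,M_1,M_2\}$ admits no isometric embedding. Alternatively, under this paper's convention that $\mathbb{R}^2$ carries the $\ell^\infty$ metric, there is a uniform shortcut for all $p\in[1,\infty]$ at once: single-point diagrams clustered far from the diagonal realize any small finite subset of $(\mathbb{R}^2,\ell^\infty)$ isometrically in $(\mathcal{D},W_p)$, and $\ell_\infty^2$ contains an isometric copy of the four-cycle $\{0,e_1,e_2,e_1+e_2\}\subset\ell_1^2$, which violates Schoenberg's inequality; note, however, that this shortcut exploits the non-Euclidean ground metric rather than the diagram structure, whereas the midpoint argument works for any choice of ground metric. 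Until one of these is supplied, your proof is incomplete exactly at $p=2$.
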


 \begin{theorem}[Carri{\`e}re and Bauer, 2018~\cite{carriere2018metric}]\label{thmbi}
 Let $n\in \mathbb{N}$. Then for any $N\in \mathbb{N}$ and $L>0$, $(\mathcal{D}_N^L,W_p)$ does not bi-Lipschitz embed into $\mathbb{R}^n$ for $p\in \mathbb{N}\cup {\infty}$
 \end{theorem}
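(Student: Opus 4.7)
The plan is to exhibit inside $(\mathcal{D}_N^L,W_p)$ isometric copies of finite metric spaces whose distortion into any fixed $\mathbb{R}^n$ grows without bound as $N$ and $L$ are allowed to grow, thereby obstructing bi-Lipschitz embeddability. The obstruction will exploit the distinguished role of the diagonal $\Delta_{\mathbb{R}}$: a point at distance $\delta$ from $\Delta_{\mathbb{R}}$ can be inserted into or deleted from a diagram at the fixed cost $\delta$, so toggling the presence of such points produces a Hamming-cube-like family.

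Explicitly, I would fix $k\leq N$ and choose points $p_1,\ldots,p_k\in\mathbb{R}^2_<$ each lying at $\ell^\infty$-distance exactly $\delta$ from $\Delta_{\mathbb{R}}$ and pairwise separated by at least $3\delta$. For each $S\subseteq\{1,\ldots,k\}$, set $D_S=\{p_i:i\in S\}$. The separation forces no optimal partial matching to pair distinct $p_i,p_j$: matching $p_i$ to $p_j$ costs at least $3\delta$, whereas sending both to the diagonal costs only $\delta$ each. A direct check then yields
\begin{equation*}
W_p(D_S,D_T)^p=|S\triangle T|\,\delta^p\quad(1\leq p<\infty),\qquad W_\infty(D_S,D_T)=\delta\text{ for }S\neq T.
\end{equation*}
Thus $\{D_S\}_S$ is, up to the scaling factor $\delta$, an equilateral set of cardinality $2^k$ when $p=\infty$ and an isometric copy of the $1/p$-snowflake of the Hamming cube $(\{0,1\}^k,\ell^1)$ when $p<\infty$.

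Non-embeddability then follows from a doubling-dimension argument. All $2^k$ diagrams $D_S$ lie in a single $W_p$-ball of radius $k^{1/p}\delta$ (or just $\delta$ for $p=\infty$) around the empty diagram, yet they are pairwise $\delta$-separated. Hence the doubling dimension of $\mathcal{D}_N^L$ at these scales grows linearly in $k$, whereas any metric space that bi-Lipschitz embeds into $\mathbb{R}^n$ has doubling dimension $O(n)$; choosing $k$ large enough relative to $n$ therefore precludes such an embedding. Since the construction fits inside $\mathcal{D}_N^L$ whenever $N\geq k$ and $L$ exceeds the $\ell^\infty$-diameter of $\{p_1,\ldots,p_k\}$, the distortion forced by any embedding grows unboundedly in $(N,L)$, which is the substance of the theorem. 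I expect the main obstacle to be the snowflake case $1\leq p<\infty$: one must ensure that the $1/p$-snowflaked Hamming cube still has unbounded doubling dimension, but this follows from the same separated-set count ($2^k$ points at pairwise distance $\delta$ inside a ball of radius $k^{1/p}\delta$), with a secondary check that the pairwise-separation hypothesis on the $p_i$ is strict enough to rule out cross-matchings in the $p=1$ borderline.
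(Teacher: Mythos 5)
The paper states this theorem as a cited survey result from Carri\`ere and Bauer and gives no proof of its own, so I can only assess your argument on its merits --- and it has a genuine gap: it proves a weaker statement than the one asserted. Your Hamming-cube family $\{D_S\}_{S\subseteq\{1,\dots,k\}}$ requires $k\leq N$, so inside the \emph{fixed} space $\mathcal{D}_N^L$ it produces at most $2^N$ diagrams that are pairwise $\delta$-separated in a ball of radius $k^{1/p}\delta$ --- a count bounded independently of the scale $\delta$. That yields only a finite lower bound on the doubling constant of $\mathcal{D}_N^L$, hence only a finite lower bound (growing with $N$, roughly $2^{cN/n}$) on the distortion of any embedding into $\mathbb{R}^n$; it does not rule out a bi-Lipschitz embedding with a large constant. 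You in effect concede this when you write that ``the distortion forced by any embedding grows unboundedly in $(N,L)$, which is the substance of the theorem'' --- but that is not the statement being proved: the theorem fixes $N$ and $L$ first and asserts that \emph{no} bi-Lipschitz embedding of that fixed space exists, which is strictly stronger than a distortion bound diverging only as $N,L\to\infty$. The phrase ``choosing $k$ large enough relative to $n$'' cannot be executed, since $k$ is capped at $N$ and the would-be embedding's bi-Lipschitz constant is a free parameter you never beat.

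The missing idea is that the obstruction must live at a single fixed $(N,L)$, and it does: vary the \emph{positions} of the off-diagonal points along the diagonal rather than toggling a bounded number of them. Already for $N=1$, put $x_i=(2i\delta,\,2i\delta+2\delta)$ for $0\le i\lesssim L/\delta$; each $x_i$ is at $\ell^\infty$-distance $\delta$ from $\Delta_{\mathbb{R}}$ and $\|x_i-x_j\|_\infty\ge 2\delta$ for $i\ne j$, so $W_p(\{x_i\},\{x_j\})=\min\bigl(2|i-j|\delta,\,2^{1/p}\delta\bigr)\ge\delta$ (the diagonal caps the cost), while every $\{x_i\}$ lies within $\delta$ of the empty diagram. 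Hence for every $\delta>0$ the ball of radius $2\delta$ about $\emptyset$ contains on the order of $L/\delta$ pairwise $\delta$-separated points; letting $\delta\to 0$ shows $\mathcal{D}_N^L$ is not doubling, and since subsets of $\mathbb{R}^n$ are doubling and the doubling property is a bi-Lipschitz invariant, no bi-Lipschitz embedding into $\mathbb{R}^n$ exists. Your computation $W_p(D_S,D_T)^p=|S\triangle T|\,\delta^p$ is correct and is a useful ingredient for distortion lower bounds that grow with $N$, but it is the scale refinement along the diagonal, not the subset toggling, that kills embeddability for fixed $N$ and $L$.
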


Here the $\mathcal{D}_N^L$ denotes a restricted space of persistence diagrams; namely the space of all diagrams which have at most $N$ points and whose points all lie in the region $[-L,L]^2$. To obtain a positive result, we need not only a cardinality restriction, but a relaxation of the embedding type.

\begin{theorem}[Mitra and Virk, 2018~\cite{mitra2021space}]
$(\mathcal{D}_N,W_p)$ coarsely embeds into Hilbert space for $1\leq p\leq\infty$, where $\mathcal{D}_N$ denotes the space of persistence diagrams with at most $N$ points.
\end{theorem}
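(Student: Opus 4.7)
The plan is to identify $(\mathcal{D}_N, W_p)$ as a symmetric quotient of a product of a two-dimensional space, observe that this quotient is a doubling metric space, and then invoke Assouad's embedding theorem, which automatically yields a coarse embedding into Euclidean (hence Hilbert) space.

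First, I would pad every diagram in $\mathcal{D}_N$ with formal diagonal points so that it has exactly $N$ entries. To this end, consider the pointed metric space $M = (\mathbb{R}^2_{<} \cup \{\star\},\, \tilde d)$, where $\star$ is an added basepoint playing the role of the diagonal and
$$\tilde d(x,y) = \min\{d(x,y),\, d(x, \Delta_{\mathbb{R}}) + d(y, \Delta_{\mathbb{R}})\}$$
for $x, y \in \mathbb{R}^2_{<}$, with $\tilde d(x, \star) = d(x, \Delta_{\mathbb{R}})$ and $\tilde d(\star, \star) = 0$. Each padded diagram corresponds to an unordered $N$-tuple in $M$, so we obtain a natural bijection $\mathcal{D}_N \longrightarrow M^N / \mathrm{Sym}(N)$. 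I would verify directly from the definition of the $p$-cost that this bijection is an isometry when $M^N / \mathrm{Sym}(N)$ is equipped with the $\ell^p$ quotient metric descended from $M^N$ (using $\ell^\infty$ when $p = \infty$). The key point is that routing a point to the diagonal corresponds exactly to matching it with a padded $\star$, so every partial matching is realised as a full bijection under $\tilde d$.

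Next, I would show that $M^N/\mathrm{Sym}(N)$ is a doubling metric space. The space $M$ is a $1$-Lipschitz quotient of the doubling space $\mathbb{R}^2$, hence is itself doubling. A finite product of doubling spaces (in any $\ell^p$ metric) remains doubling, and taking the quotient by a finite group of isometries preserves the doubling property, so $M^N/\mathrm{Sym}(N)$ is doubling. Finally, I would apply Assouad's embedding theorem: every doubling metric space admits, for each $\theta \in (0,1)$, a bi-Lipschitz embedding of its $\theta$-snowflake into some finite-dimensional Euclidean space $\mathbb{R}^n$. Composing with the isometry above yields a map $F\colon \mathcal{D}_N \to \mathbb{R}^n$ satisfying
$$A^{-1}\, W_p(D_1, D_2)^\theta \;\leq\; \|F(D_1) - F(D_2)\| \;\leq\; A\, W_p(D_1, D_2)^\theta,$$
which is immediately a coarse embedding with control functions $\rho_1(t) = A^{-1} t^\theta$ and $\rho_2(t) = A\, t^\theta$, and $\mathbb{R}^n$ embeds isometrically into any infinite-dimensional Hilbert space.

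The main obstacle I anticipate is the careful verification of the isometry claim in the first step: namely, that the infimum of the $p$-cost over partial matchings with two unmatched sets of potentially different sizes coincides with the $\ell^p$ quotient distance on the padded $N$-tuples in $M$ under the collapsed metric $\tilde d$. The remaining ingredients---doubling being preserved under $1$-Lipschitz quotients, finite products, and finite-group quotients by isometries, together with Assouad's theorem---are classical.
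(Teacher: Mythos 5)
There is a genuine gap, and it is fatal to the strategy: the space $M$ you construct is \emph{not} doubling, so Assouad's theorem does not apply. Fix $r>0$ and consider points $x_k$ lying on the line at ($\ell^\infty$-)distance $2r/3$ from the diagonal, spaced so that $d(x_k,x_l)\geq 10r$ for $k\neq l$. Each $x_k$ satisfies $\tilde d(x_k,\star)=2r/3<r$, so all of them lie in the ball $B(\star,r)\subseteq M$, while for $k\neq l$ one has $\tilde d(x_k,x_l)=\min\{d(x_k,x_l),\,4r/3\}=4r/3$. A ball of radius $r/2$ can contain at most one of the $x_k$ (the ``shortcut through the diagonal'' branch of $\tilde d$ is unavailable because $d(x_k,\Delta_{\mathbb{R}})=2r/3>r/2$), so $B(\star,r)$ admits no finite cover by balls of half the radius. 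The underlying error is the claim that doubling passes to $1$-Lipschitz quotients: that is true for Lipschitz quotient maps in the David--Semmes sense (Lipschitz \emph{and} co-Lipschitz), but the collapse map $\mathbb{R}^2_{<}\cup\Delta_{\mathbb{R}}\to M$ is not co-Lipschitz, precisely because the image of a bounded ball around a diagonal point cannot contain the unbounded strip $B(\star,r)$. The same phenomenon shows directly that $(\mathcal{D}_N,W_p)$ itself is not doubling (already for $N=1$: infinitely many one-point diagrams near the diagonal, pairwise far apart, all within distance $r$ of the empty diagram), so no repair of the quotient bookkeeping can rescue an Assouad-based argument; the coarse embedding has to be produced by other means (the cited Mitra--Virk result is proved by a different route, and the paper under review only quotes it).

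Two secondary remarks. First, your identification of $(\mathcal{D}_N,W_p)$ with $M^N/\mathrm{Sym}(N)$ is not an isometry for $p>1$: when there are not enough padded copies of $\star$, an unmatched pair $(x,y)$ costs $\tilde d(x,y)\leq d(x,\Delta_{\mathbb{R}})+d(y,\Delta_{\mathbb{R}})$ in the quotient but only $\bigl(d(x,\Delta_{\mathbb{R}})^p+d(y,\Delta_{\mathbb{R}})^p\bigr)^{1/p}$ in $W_p$; one gets a bi-Lipschitz equivalence with constant $2^{1-1/p}$, which would have been adequate had the rest of the argument worked. Second, even where Assouad does apply it only embeds a \emph{snowflake} bi-Lipschitzly; that is indeed enough for a coarse embedding, so that step of your logic is fine --- the failure is entirely in the doubling claim.
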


Without the cardinality restriction, even coarse embeddability is not possible in many natural cases. A first result in this direction showed that the space of all persistence diagrams fails to have Yu's Property A~\cite{yu2000coarse}, a sufficient but not necessary condition for coarse embeddability in Hilbert space originally introduced in Yu's work on the coarse Baum-Connes and Novikov Conjectures.

\begin{theorem}[Bell et al., 2019~\cite{bell2021space}]
$(\mathcal{D},W_p)$ does not have Yu's Property $A$ for $1 \leq p<\infty$.
\end{theorem}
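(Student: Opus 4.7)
My plan is to exploit the standard fact that the existence of a coarsely embedded family of expanders is an obstruction to Property A, and to exhibit such a family inside $(\mathcal{D}, W_p)$. Concretely, I would take a sequence $\{G_n\}$ of bounded-degree expander graphs with $|V(G_n)| \to \infty$ (e.g.\ Margulis- or LPS-type expanders) and construct a map $\phi\colon \bigsqcup_n V(G_n) \to \mathcal{D}$ whose restriction to each $V(G_n)$ is a bi-Lipschitz embedding with distortion independent of $n$, when the vertex sets are spaced sufficiently far apart in $\mathcal{D}$.

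The natural construction to try: fix a large scale $R$ and a small radius $r$, enumerate $V(G_n) = \{u_1, u_2, \ldots, u_{N_n}\}$, and for each $v \in V(G_n)$ let
\[
\phi(v) \;=\; \bigl\{ (i,\, i+R) \,:\, u_i \in B_{G_n}(v, r) \bigr\},
\]
thought of as a multiset of off-diagonal points in $\mathbb{R}^2_{<}$. Intuitively, $\phi(v)$ is the characteristic function of a ball around $v$, written as a persistence diagram. Comparing $\phi(v)$ and $\phi(w)$, the optimal partial matching (for $R$ large enough relative to $r$ and to the point-to-diagonal distances) will be forced to match identical coordinates to themselves and send the rest to the diagonal, so that
\[
W_p(\phi(v),\phi(w))^p \;\asymp\; R^p \cdot |B_{G_n}(v,r) \,\triangle\, B_{G_n}(w,r)|,
\]
and the symmetric difference on the right is bi-Lipschitz equivalent to $d_{G_n}(v,w)$ with constants depending only on the degree and on $r$. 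Spacing the images of different $G_n$ apart in $\mathcal{D}$ (e.g.\ by translating the enumeration index) then yields a coarse embedding of the disjoint union, and the standard pullback argument for Property A finishes the proof: any Property A family on $\mathcal{D}$ would restrict to one on the embedded copy of $\bigsqcup_n G_n$, contradicting the known failure of Property A for expander box spaces.

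The hard part is Step 3, i.e.\ verifying that the $W_p$ distance really does encode the symmetric difference $|B(v,r) \triangle B(w,r)|$ with uniform bi-Lipschitz constants. The $W_p$ optimum is an optimization over partial matchings, and one must control both the possibility of matching non-identical coordinates to each other and the possibility of sending points to the diagonal; the scale $R$ has to be tuned so that neither alternative is ever cheaper than the intended matching. The lower bound then needs the expansion property of $G_n$ in order to guarantee $|B(v,r) \triangle B(w,r)| \gtrsim d_{G_n}(v,w)$ uniformly in $n$.

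If that tuning proves delicate, a fallback is to work directly from the definition of Property A and exhibit, for some fixed $R,\varepsilon>0$, an obstructing finite substructure inside $\mathcal{D}$ for every choice of scale $S$, using the same expander input but without packaging it as a single coherent map $\phi$. Either route ultimately relies on the same geometric content: the space $(\mathcal{D}, W_p)$ is rich enough to encode combinatorial expansion, which is incompatible with Property A.
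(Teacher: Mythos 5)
First, a caveat: the paper does not actually prove this statement---it is quoted from Bell et al.~\cite{bell2021space}---so there is no internal proof to compare against, and I can only assess your argument on its own terms. It has a fatal gap at exactly the step you flag as the hard one, and the overall strategy cannot succeed without resolving the main open problem of the paper. Concretely: for a fixed radius $r$ and degree bound $\Delta$, the symmetric difference $|B_{G_n}(v,r)\,\triangle\,B_{G_n}(w,r)|$ is at most $2(\Delta+1)^{r}$, a constant independent of $n$, whereas $d_{G_n}(v,w)$ is unbounded (the diameter of a bounded-degree expander family grows like $\log |V(G_n)|$). So the symmetric difference is \emph{not} bi-Lipschitz equivalent to the graph distance: it detects distances only up to scale about $2r$ and then saturates. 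Your map $\phi$ therefore collapses each $G_n$ into a subset of $(\mathcal{D},W_p)$ of uniformly bounded diameter and is not a uniform bi-Lipschitz (or even coarse) embedding of the expander sequence. A secondary problem is that points of the symmetric difference can be matched to each other at cost $|i-j|$, which can equal $1 \ll R$ for an arbitrary enumeration, so even the claimed normalization $W_p(\phi(v),\phi(w))^p \asymp R^p\,|B(v,r)\triangle B(w,r)|$ is wrong; but this is moot given the saturation issue.

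More structurally, a uniformly coarsely embedded expander family in $(\mathcal{D},W_p)$ would show that $(\mathcal{D},W_p)$ does not coarsely embed into Hilbert space, which for $1\le p\le 2$ is precisely the open question this paper is built around and is strictly stronger than the failure of Property A. The cited result lives exactly in the gap between Property A and coarse Hilbert-space embeddability: Bell et al.\ refute the definition of Property A directly on explicit finite configurations of diagrams by a counting/averaging argument, without producing any expander-like obstruction---and indeed they cannot, or the open problem would be closed. So any correct proof of this theorem for $p\le 2$ must take a route that does not pass through embedded expanders; attempting to repair your construction by letting $r$ grow with $n$ runs into the fact that balls of radius comparable to $\mathrm{diam}(G_n)$ exhaust the graph, and the bi-Lipschitz control is lost again. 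The approach, not just the execution, needs to change.
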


Later it was shown for all $p>2$ that the space of persistence diagrams fails to coarsely embed into Hilbert space. 
\begin{theorem}[Bubenik and Wagner, 2020~\cite{bubenik2020embeddings}]\label{waginf}
$(\mathcal{D},W_{\infty})$ does not coarsely embed into Hilbert space.
\end{theorem}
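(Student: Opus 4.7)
The plan is to exploit the fact that $W_\infty$ is a supremum-based metric, so that $(\mathcal{D}, W_\infty)$ contains isometric copies of arbitrarily large $\ell^\infty$ cubes, and then invoke an obstruction to Hilbert-space embeddability coming from $c_0$.

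First I would build, for any $n \in \mathbb{N}$ and any $K > 0$, an isometric embedding $\Phi \colon ([0,K]^n, \ell^\infty) \hookrightarrow (\mathcal{D}, W_\infty)$. Fix base points $p_1, \ldots, p_n \in \mathbb{R}^2_<$ that are pairwise $\ell^\infty$-separated by much more than $K$, and whose $\ell^\infty$-distance to the diagonal $\Delta_\mathbb{R}$ is also much larger than $K$; and fix a unit vector $e_0 \in \mathbb{R}^2$ (for instance $e_0 = (0,1)$). Set $\Phi(v) = \{p_i + v_i e_0 : 1 \leq i \leq n\}$. The wide separation of the $p_i$ forces the optimal partial matching between $\Phi(v)$ and $\Phi(w)$ to be the obvious identity matching, since any deletion (paying a diagonal cost of order $\gg K$) or any cross-matching (paying at least the inter-base-point distance) incurs cost $\gg K$, whereas the identity matching costs exactly $\max_i |v_i - w_i| \le K$. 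Consequently $W_\infty(\Phi(v), \Phi(w)) = \max_i |v_i - w_i|$.

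Next, suppose toward contradiction that $f \colon (\mathcal{D}, W_\infty) \to H$ is a coarse embedding with control functions $\rho_1, \rho_2$. Any finite subset $F \subset c_0$ is bounded and has essentially finite support, so $F$ embeds isometrically into some $([0,K]^n, \ell^\infty)$, hence into $(\mathcal{D}, W_\infty)$ via $\Phi$. Composing with $f$ yields coarse embeddings of all finite subsets of $c_0$ into $H$ with the same control $\rho_1, \rho_2$. By Nowak's theorem~\cite{nowak2005coarse}, this uniform coarse embeddability of finite subsets forces $c_0$ itself to coarsely embed into $H$, contradicting Kalton's theorem that $c_0$ does not coarsely embed into any reflexive Banach space.

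The main obstacle is the reliance on Kalton's non-embedding result for $c_0$, which is a deep external input. To avoid it, one could instead identify a specific sequence of finite metric spaces $X_n$ inside $(\mathcal{D}, W_\infty)$ with provably unbounded Hilbert distortion --- for instance, bi-Lipschitz copies of Hamming-cube-style graphs carved out of the $\ell^\infty$ cubes above, combined with an Enflo-type or Poincar\'e-inequality lower bound --- and then apply Nowak's criterion directly. Either way, the crux is translating the supremum structure of $W_\infty$ into an $\ell^\infty$-type obstruction that Hilbert space cannot absorb.
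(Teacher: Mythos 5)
This statement is quoted from Bubenik--Wagner and is not proved in the paper, so there is no in-paper argument to compare against; I can only assess your proposal on its own terms. Your main line of reasoning is sound and is essentially the argument in the cited source: the widely separated base points do force the identity matching to be optimal, so $\Phi$ is an isometric copy of $([0,K]^n,\ell^\infty)$ in $(\mathcal{D},W_\infty)$; every finite subset of $c_0$ projects isometrically onto finitely many coordinates (the sup defining each pairwise distance is attained because differences lie in $c_0$), hence lands in some such cube; Nowak's criterion (Theorem~\ref{nowak}) then upgrades the uniform embeddability of finite subsets to a coarse embedding of $c_0$ itself, contradicting Kalton's theorem. Note that Kalton's result is not really avoidable overhead here --- it is the engine of the original proof as well.

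Your proposed workaround, however, does not work as stated. The Hamming cube $\{0,1\}^n$ equipped with the $\ell^\infty$ metric is an equilateral set (any two distinct points are at distance exactly $1$), so it embeds isometrically into Hilbert space as a regular simplex; Enflo's lower bound applies to the Hamming ($\ell^1$) metric, which your cubes do not carry. More fundamentally, exhibiting finite subsets with unbounded bi-Lipschitz distortion into Hilbert space is not an obstruction to coarse embeddability (for instance, $\ell_1$ coarsely embeds into Hilbert space yet contains finite subsets of unbounded Euclidean distortion); Nowak's criterion concerns uniform \emph{coarse} controls, so any such replacement argument would need a family of configurations with growing diameters violating a coarse Poincar\'e-type inequality, not merely growing distortion. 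Since this alternative is offered only as a side remark, the proof as a whole stands on the $c_0$ plus Kalton route.
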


\begin{theorem}[Wagner, 2021~\cite{wagner2021nonembeddability}]\label{wagp}
$(\mathcal{D},W_p)$ does not coarsely embed into Hilbert space for $2<p<\infty$.
\end{theorem}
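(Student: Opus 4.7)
The plan is to construct, for every $n$ and $M$, an isometric embedding of the $\ell^\infty$-ball $[-M,M]^n \subset \ell^p_n$ into $(\mathcal{D},W_p)$, and then combine this with the known fact that $\ell^p$ does not coarsely embed into Hilbert space for $p>2$ (Kalton). The geometry here is straightforward: persistence diagrams can encode a coordinate vector by placing points in widely separated vertical columns, each with large persistence, so that only the identity matching is competitive.

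Fix $n$ and $M>0$, and pick scale parameters $K$ and $h$ much larger than $M$; for concreteness $K>4M$ and $h>5M$ work. Define $\Phi\colon [-M,M]^n \to \mathcal{D}$ by
$$\Phi(x_1,\dots,x_n) \;=\; \{(iK,\; iK+h+x_i) : 1\le i\le n\}.$$
The key claim is that, with the $\ell^\infty$ metric on $\mathbb{R}^2$, the optimal partial matching between $\Phi(x)$ and $\Phi(y)$ is the identity $i\mapsto i$, which yields $W_p(\Phi(x),\Phi(y))^p = \sum_i |x_i-y_i|^p$. One would verify this by scale separation: the $i$-th point of $\Phi(x)$ sits at $\ell^\infty$-distance $(h+x_i)/2 \ge (h-M)/2$ from the diagonal, and at $\ell^\infty$-distance at least $K$ from every other point of $\Phi(x)$ or $\Phi(y)$. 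Any transposition or deletion in the matching therefore contributes strictly more than the identity's per-coordinate ceiling of $(2M)^p$. Thus $\Phi$ is an isometric embedding of $([-M,M]^n,\|\cdot\|_p)$ into $(\mathcal{D},W_p)$.

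Suppose now, for contradiction, that $(\mathcal{D},W_p)$ admits a coarse embedding $f$ into a Hilbert space $H$ with control functions $\rho_1\le\rho_2$. Every finite subset $F\subset\ell^p$ is contained in some box $[-M,M]^n$, and composing $f$ with $\Phi$ there gives a coarse embedding of $F$ into $H$ controlled by the very same $\rho_1,\rho_2$, uniformly in $F$. By Nowak's characterization~\cite{nowak2005coarse}, this equi-coarse family of embeddings of finite subsets assembles into a coarse embedding of $\ell^p$ itself into Hilbert space. For $p>2$ this contradicts the classical result that $\ell^p$ does not coarsely embed into Hilbert space, completing the proof.

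I expect the main obstacle to be Step~1, specifically the verification that the identity matching is optimal uniformly across $p\in(2,\infty)$. The precise thresholds relating $K$, $h$, and $M$ scale mildly with $p$, and some bookkeeping is needed to ensure the constants remain bounded as $p$ ranges over compact subintervals of $(2,\infty)$; in particular, as $p$ grows one must separate scales enough that a single deletion term $((h-M)/2)^p$ dominates a sum of $(2M)^p$ identity contributions. Once these bounds are in hand, Nowak's reduction and Kalton's nonembeddability theorem close the argument without further effort.
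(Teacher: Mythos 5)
This theorem is quoted from Wagner~\cite{wagner2021nonembeddability}; the paper gives no proof of its own, so there is nothing internal to compare against. Your argument is essentially Wagner's: embed $\ell^p$ into $(\mathcal{D},W_p)$ by placing points in well-separated columns far from the diagonal, then invoke the non-embeddability of $\ell^p$ into Hilbert space for $p>2$. The construction is sound, and your worry about scale separation is a non-issue: the comparison between the identity matching and any competitor is made index by index (deviating at index $i$ costs at least $\min(K,(h-M)/2)^p>(2M)^p\geq|x_i-y_i|^p$ at that index), so no single deletion term ever needs to dominate the whole sum and the constants $K>4M$, $h>5M$ work for every fixed $p$. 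Two small repairs: a finite subset of $\ell^p$ consists of infinite sequences and so is \emph{not} literally contained in any $[-M,M]^n$ --- you should run Nowak's criterion on the dense subspace of finitely supported sequences (or truncate, absorbing an additive error into $\rho_\pm$) and then pass to $\ell^p$ by density; and the non-embeddability of $\ell^p$ ($p>2$) into Hilbert space is due to Johnson and Randrianarivony (with a later proof by Mendel and Naor), not Kalton.
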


Turning to Wasserstein space, Andoni, Naor, and Neiman show that $p$-Wasserstein space on $\mathbb{R}^3$ is $\frac{1}{p}$-snowflake universal. The proof of this result relies on an explicit embedding of any snowflake of a finite metric space into $p$-Wasserstein space on $\mathbb{R}^3$ as a uniform measure.

\begin{theorem}[Andoni-Naor-Neiman, 2018~\cite{andoni2018snowflake}]
If $p \in (1,\infty)$ then for every finite metric space $(X,d_X)$ we have
\begin{align*}
    c_{\mathcal{P}_p(\mathbb{R}^3)}\left(X,d_X^{\frac{1}{p}}\right) = 1.
\end{align*}
\end{theorem}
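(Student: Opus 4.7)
The goal is to construct, for a given finite metric space $(X, d_X)$, an explicit map $F \colon X \to \mathcal{P}_p(\mathbb{R}^3)$ satisfying $W_p(F(x), F(y)) = d_X(x, y)^{1/p}$ for all $x, y \in X$. Such a map is an isometric embedding of $(X, d_X^{1/p})$ into $\mathcal{P}_p(\mathbb{R}^3)$, which yields $c_{\mathcal{P}_p(\mathbb{R}^3)}(X, d_X^{1/p}) \leq 1$; combined with the fact that distortion is always at least $1$, this gives the desired equality. The guiding idea is that $F(x)$ should be a uniform probability measure supported on a finite subset of $\mathbb{R}^3$ that records the position of $x$ in a carefully designed multi-scale decomposition of $X$.

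First, after rescaling and approximating pairwise distances dyadically at a cost of $(1+\varepsilon)$ distortion, I would build a nested sequence of partitions $\mathcal{P}_0 \succ \mathcal{P}_1 \succ \cdots \succ \mathcal{P}_M$ of $X$, where each cluster in $\mathcal{P}_k$ has diameter at most $2^k$ and refines the partition at scale $2^{k+1}$. For each cluster $C$ at scale $k$ I would assign a point $v_C \in \mathbb{R}^3$ whose third coordinate depends only on the scale $k$ and whose first two coordinates are offset from those of the parent cluster by a vector of length proportional to $2^{k/p}$, with the offset directions chosen from a rich family so that branching directions across different scales are mutually nearly orthogonal. Then I would set $F(x)$ to be the uniform probability measure on $\{v_C : x \in C \in \mathcal{P}_k,\ 0 \leq k \leq M\}$, with weights tuned so that the contribution of scale $k$ to the $p$-th transport cost is proportional to $2^k$.

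The upper bound $W_p(F(x), F(y))^p \leq d_X(x, y)$ would follow from an explicit coupling: mass sitting at $v_C$ for common-ancestor clusters $C$ of $x$ and $y$ is matched to itself at zero cost, while mass at scales strictly below the least common ancestor is transported along corresponding pairs of branching points, incurring an $\ell_\infty^p$ cost that telescopes into a geometric series summing to (a dyadic approximation of) $d_X(x, y)$. The main obstacle is the reverse inequality, which rules out cleverer transport plans that might cut across several scales at once. This is the step that genuinely requires three dimensions: the extra room is what allows the branching directions to be arranged mutually almost orthogonally, so that any deviation from the canonical plan is penalized scale-by-scale via convexity of $t \mapsto t^p$. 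I would finish by passing $\varepsilon \to 0$ through a compactness argument on uniform measures with bounded support in $\mathbb{R}^3$, upgrading the $(1+\varepsilon)$-distortion embeddings to the exact statement $c_{\mathcal{P}_p(\mathbb{R}^3)}(X, d_X^{1/p}) = 1$.
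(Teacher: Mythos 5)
First, a point of order: the paper does not prove this theorem. It is quoted verbatim from Andoni--Naor--Neiman \cite{andoni2018snowflake}, and the only thing the paper says about its proof is that it ``relies on an explicit embedding of any snowflake of a finite metric space into $p$-Wasserstein space on $\mathbb{R}^3$ as a uniform measure.'' Your sketch agrees with that one-line description at the level of ``send each point to a uniform measure on a finite subset of $\mathbb{R}^3$,'' but the mechanism you propose is not theirs, and it has a gap that cannot be repaired.

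The gap is in the very first step, before any optimal transport happens. With a single nested sequence of partitions $\mathcal{P}_0 \succ \cdots \succ \mathcal{P}_M$ and one support point $v_C$ per cluster containing $x$, the supports of $F(x)$ and $F(y)$ coincide at every scale above the least common ancestor cluster of $x$ and $y$ and differ below it, so your telescoping cost is $\sum_{k \leq k_0} (\text{offset at scale } k)^p \asymp 2^{k_0}$, where $2^{k_0}$ is the diameter scale of the first cluster separating $x$ from $y$. In other words, $W_p(F(x),F(y))^p$ computes a dominating \emph{ultrametric} (hierarchically-separated-tree) approximation of $d_X$, not $d_X$ itself. No hierarchical partition of an arbitrary finite metric space separates every pair at a scale within a factor $1+\varepsilon$ of its distance: already for the $n$-cycle, any such hierarchy must cut some adjacent pair inside a cluster of diameter $\Omega(n)$, and even randomized hierarchical partitions incur distortion $\Omega(\log n)$. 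So the two inequalities $W_p^p \leq (1+\varepsilon)d_X$ and $W_p^p \geq (1-\varepsilon)d_X$ cannot both hold for your $F$, and the conclusion $c_{\mathcal{P}_p(\mathbb{R}^3)}(X,d_X^{1/p})=1$ is out of reach by this route. This is an Assouad-type construction, and Assouad's theorem correspondingly requires a doubling hypothesis and never achieves distortion $1+\varepsilon$. Two smaller issues: the hypothesis $p>1$ must enter essentially (the statement excludes $p=1$, and the present paper notes its own construction ``degenerates'' there), yet it plays no identifiable role in your argument beyond a vague appeal to convexity; and the closing compactness step is unnecessary, since $c_Y(X)$ is defined as an infimum of distortions, so a family of $(1+\varepsilon)$-embeddings already gives the value $1$. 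The actual Andoni--Naor--Neiman argument is a substantially more delicate construction with a Kantorovich-duality-based lower bound; if you want to cite this theorem, cite it, and if you want to prove it, start from their paper rather than from a multiscale partition.
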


As a corollary, Andoni-Naor-Neiman prove that $p$-Wasserstein space on $\mathbb{R}^3$ fails to coarsely embed into Hilbert space for $p > 1$. The case for $\mathbb{R}^2$ and $p = 1$ remains open.

\begin{theorem}[Andoni-Naor-Neiman, 2018~\cite{andoni2018snowflake}]
If $p>1$ then $\mathcal{P}_p(\mathbb{R}^3)$ does not admit a coarse embedding into any Banach space of nontrivial type. In particular, for $p>1$, $\mathcal{P}_p(\mathbb{R}^3)$ does not admit a coarse embedding into Hilbert space.
\end{theorem}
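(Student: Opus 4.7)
The plan is to derive this as a corollary of the preceding snowflake universality theorem, exploiting the standard principle that a metric space which contains all finite snowflakes of a fixed exponent with uniformly bounded distortion cannot coarsely embed into a Banach space of non-trivial type.

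I would first suppose for contradiction that there exists a coarse embedding $f\colon \mathcal{P}_p(\mathbb{R}^3) \to E$ with moduli $\rho_1, \rho_2$, where $E$ is a Banach space of non-trivial type. By applying the snowflake universality theorem to the rescaled finite metric space $(Y, s^p d_Y)$ for arbitrary $s>0$, one obtains, for every finite metric space $(Y, d_Y)$ and every scale $s$, an isometric embedding $\iota_{Y,s}\colon (Y, s\, d_Y^{1/p}) \hookrightarrow \mathcal{P}_p(\mathbb{R}^3)$. Post-composing with $f$ yields a map $Y \to E$ satisfying
\[
\rho_1\!\left(s\, d_Y(x,y)^{1/p}\right) \leq \|f\iota_{Y,s}(x) - f\iota_{Y,s}(y)\|_E \leq \rho_2\!\left(s\, d_Y(x,y)^{1/p}\right).
\]

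Next, I would convert these coarse estimates into a bi-Lipschitz embedding of the snowflake $(Y, d_Y^{1/p})$ into an ultrapower of $E$, with distortion bounded independently of $Y$. The standard device is to fix $Y$, choose a sequence $s_n \to \infty$, normalize the maps by a factor $\lambda_n$ making the upper estimate uniformly $O(d_Y^{1/p})$, and pass to an ultralimit. Since $\rho_1(t)\to\infty$, the renormalized lower bound is also linear in $d_Y^{1/p}$ up to a constant, producing a bi-Lipschitz embedding with distortion at most some $D$ depending only on $\rho_1, \rho_2$. Because ultrapowers preserve Rademacher type, this would exhibit all finite $(1/p)$-snowflakes embedded with uniformly bounded distortion into a Banach space of non-trivial type.

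Finally, I would appeal to the Bourgain--Milman--Wolfson/Pisier theory of metric type: non-trivial Rademacher type implies non-trivial Enflo/metric type, which in turn yields quantitative Poincar\'e-type inequalities obstructing the uniform embedding of all finite $\theta$-snowflakes into $E$ for any fixed $\theta \in (0,1)$. (Concretely, one can take the $\theta$-snowflakes of a sequence of Hamming cubes or expanders to witness the obstruction.) This contradicts the uniform family of distortion-$D$ embeddings produced above, completing the argument; the Hilbert space case then follows since $\ell^2$ has type $2$. The main obstacle is the renormalization step, where the nonlinearity of the coarse moduli $\rho_1, \rho_2$ must be absorbed into the ultralimit; this works because we have the freedom to dilate the snowflake in the domain by $s$ before applying $f$, which lets us push the relevant arguments of $\rho_1, \rho_2$ into a regime where their ratio is controlled.
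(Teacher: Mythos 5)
The paper does not prove this statement: it is quoted from Andoni--Naor--Neiman with only the remark that it is a corollary of their snowflake universality theorem. You correctly identify that route, but two steps in your derivation have genuine gaps.

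The first gap is the coarse-to-bi-Lipschitz reduction via ultralimits. From the maps $g_s = f\circ\iota_{Y,s}$ you obtain $\rho_1(s\,\delta(x,y))\le\|g_s(x)-g_s(y)\|\le\rho_2(s\,\delta(x,y))$ with $\delta=d_Y^{1/p}$. To renormalize $\lambda_s g_s$ into a map that is bi-Lipschitz on $(Y,\delta)$ with distortion $D$ independent of $Y$, you need a scale $s$ at which $\sup_{\delta}\rho_2(s\delta)/\delta \le D\inf_{\delta}\rho_1(s\delta)/\delta$, where $\delta$ ranges over the nonzero values of $d_Y^{1/p}$, say $[m,M]$. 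Since $M/m$ is unbounded over finite metric spaces and the compression modulus may grow arbitrarily slowly (take $\rho_1(t)=\max\{0,\log t\}$ and $\rho_2(t)=t$, legitimate moduli for an arbitrary coarse embedding of a geodesic space), no choice of $s$ achieves this once $M/m$ is large: one is forced into $sM\le D\log(sM)$ while also requiring $sm>1$, which is impossible for $M/m$ large. Dilating the domain moves both arguments of $\rho_1,\rho_2$ by the same factor and does not control their ratio, so the lower bound evaporates in the ultralimit. This is precisely why coarse non-embeddability cannot in general be reduced to uniform bi-Lipschitz non-embeddability of finite pieces, and why the actual argument must work with the coarse moduli directly.

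The second gap is the choice of witnesses in the final step. Enflo/metric type gives no lower bound for $\theta$-snowflakes of Hamming cubes: snowflaking shrinks the diagonal-to-edge ratio from $n$ to $n^{\theta}$, so the type-$p_0$ inequality only forces distortion at least a constant times $n^{\theta-1}$, which tends to $0$ (indeed the $\tfrac12$-snowflake of $(\{0,1\}^n,\ell_1)$ embeds isometrically into Hilbert space). Expanders are likewise not known to obstruct coarse embeddings into arbitrary Banach spaces of non-trivial type. The argument of Andoni--Naor--Neiman instead takes as witnesses the $\ell_\infty$ grids $[m]_\infty^n$ and applies the Mendel--Naor metric cotype inequality with the sharp scaling $m\asymp n^{1/q}$ (this is exactly where non-trivial type, rather than mere finite cotype, enters). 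Crucially, that Poincar\'e-type inequality compares distances at only two scales, so it can be applied directly to the composed coarse maps: plugging in the moduli yields an inequality of the form $\rho_1(c\,m^{1/p})\le C\rho_2(1)$ for all $m$, contradicting $\rho_1(t)\to\infty$ with no bi-Lipschitz reduction or ultrapower needed.
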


To summarize, coarse embeddability in Hilbert space remains an open question for persistence diagrams when $1 \leq p \leq 2$ and for Wasserstein space when the underlying space is the plane (the most closely related context to that of persistence diagrams). Our main results connect these two open questions.

\section{\redd{Embeddings of finite subsets}}

The following characterization, by Nowak, of coarse embeddabilty into Hilbert space by way of finite subsets will prove quite useful. 
\begin{theorem}[Nowak, 2005~\cite{nowak2005coarse}]\label{nowak}
A metric space $X$ admits a coarse embedding into a Hilbert space if and only if there exists non-decreasing functions $\rho_{-},\rho_{+}\colon [0,\infty) \to [0,\infty)$ such that $\lim_{t\to \infty} \rho_{-}(t) = \infty$ and for every finite subset $A \subset X$ there exists a map $f_A\colon A\to \ell_2$ satisfying, 
\begin{align*}
    \rho_{-}(d(x,y))\leq \|f_A(x)-f_A(y)\| \leq \rho_{+}(d(x,y))
\end{align*}
for every $x,y \in A$.
\end{theorem}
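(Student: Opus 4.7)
The plan is to prove the nontrivial reverse direction; the forward direction follows immediately, since if $f\colon X \to \ell_2$ is a coarse embedding with control functions $\rho_-,\rho_+$, then for every finite $A \subset X$ the restriction $f_A = f|_A$ satisfies the required two-sided inequality with these same $\rho_-,\rho_+$, independently of $A$.

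For the converse, the strategy is to transfer the Hilbert-valued data $\{f_A\}$ into the language of conditionally negative definite kernels, since pointwise limits behave well in that setting. For each finite $A \subset X$ define
\[ \psi_A(x,y) = \|f_A(x) - f_A(y)\|^2 \qquad (x,y \in A). \]
Each $\psi_A$ is symmetric, vanishes on the diagonal, and is conditionally negative definite: for any $x_1,\dots,x_n \in A$ and scalars $c_1,\dots,c_n$ with $\sum_i c_i = 0$ one has $\sum_{i,j} c_i c_j \psi_A(x_i,x_j) \le 0$ (this follows directly from expanding $\|\sum_i c_i f_A(x_i)\|^2 \ge 0$). The hypothesis becomes $\rho_-(d(x,y))^2 \le \psi_A(x,y) \le \rho_+(d(x,y))^2$.

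Next I would assemble a single global kernel $\psi$ on $X \times X$. Direct the collection of finite subsets of $X$ by inclusion and fix a non-principal ultrafilter $\mathcal{U}$ refining the order filter on this directed set. For every fixed pair $(x,y) \in X \times X$ the net $A \mapsto \psi_A(x,y)$ is eventually defined (once $A \supseteq \{x,y\}$) and takes values in the compact interval $[\rho_-(d(x,y))^2, \rho_+(d(x,y))^2]$, so
\[ \psi(x,y) := \lim_{A\to\mathcal{U}} \psi_A(x,y) \]
exists. Symmetry, vanishing on the diagonal, the two-sided bound by $\rho_\pm^2 \circ d$, and the finite quadratic-form inequality defining conditional negative definiteness are all pointwise-closed conditions, so $\psi$ inherits each of them on all of $X \times X$. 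Then invoke the classical Schoenberg--Kolmogorov reconstruction: every symmetric, conditionally negative definite kernel $\psi \colon X\times X \to [0,\infty)$ vanishing on the diagonal equals $\|g(x) - g(y)\|^2$ for some map $g \colon X \to H$ into a real Hilbert space $H$ (built as the completion of the pre-Hilbert space of finitely supported zero-sum formal combinations over $X$ with inner product determined by $-\psi$). Taking square roots in the bounds gives $\rho_-(d(x,y)) \le \|g(x)-g(y)\| \le \rho_+(d(x,y))$, so $g$ is the sought coarse embedding.

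The main obstacle is the ultralimit step: one must verify cleanly that conditional negative definiteness is preserved under pointwise limits along $\mathcal{U}$, and deal with the fact that for a general (not necessarily separable) $X$ the indexing directed set is uncountable, so sequential compactness is not directly available. For separable $X$ a diagonal sequential argument through a countable dense subset also works and is perhaps more elementary, but the ultrafilter formulation avoids any separability hypothesis. The Schoenberg reconstruction at the end is standard and requires no additional work beyond checking the bounds transfer under the square-root map.
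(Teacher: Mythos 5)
This statement is quoted from Nowak's 2005 paper and is not proved in the present paper, so there is no internal proof to compare against. Your argument is correct and complete: the passage to the squared-distance kernels $\psi_A$, the ultralimit (equivalently, a Tychonoff cluster point in $\prod_{(x,y)}[\rho_-(d(x,y))^2,\rho_+(d(x,y))^2]$, which also disposes of your worry about non-separable $X$), the preservation of conditional negative definiteness under pointwise limits (immediate, since each defining inequality involves only finitely many points and holds on a tail of the directed set), and the Schoenberg--GNS reconstruction together give the reverse implication, and the forward implication is restriction. This is essentially the standard route, and in substance the same as Nowak's original proof.
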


Nowak's characterization of coarse embeddings into Hilbert space allows one to restrict one's attention to maps on finite subsets. In light of this the proofs of Theorems $\ref{A}$ and $\ref{B}$ rely on finding low distortion maps on finite subsets of the relevant spaces. 

\subsection{ \redd{$\varepsilon$-Quasi-isometric embeddings for finite sets of distributions}}

We now proceed to show the existence of a suitable map from any finite subset of measures in $\mathcal{P}_p(\mathbb{R}^2)$, the $p$-Wasserstein space over $\mathbb{R}^2$, into the space of persistence diagrams. For convenience, we will use the notation $d(\cdot, \cdot)$ for distances in both $(\mathcal{D}, W_p)$ and $\mathcal{P}_p(\mathbb{R}^2)$ for the remainder of this section. By a \textbf{discrete rational measure} we mean a discrete measure $\alpha = \sum_{i=1}^n a_i \delta_{x_i}$ with finite support where all the $a_i$ are rational numbers. 

\begin{lemma}\label{isom}
Suppose $A = \{\alpha_1,\cdots, \alpha_n\}$ is a finite set of discrete rational measures in $\mathcal{P}_p(\mathbb{R}^2)$. Then there exists an isometric embedding $f\colon A \to (\mathcal{D}, W_p).$
\end{lemma}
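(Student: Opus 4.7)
The plan is to build the embedding $f$ by rescaling and translating the supports of the $\alpha_i$ so that (i) the resulting points lie in $\mathbb{R}^2_<$, (ii) optimal partial matchings between the resulting diagrams are always full bijections (so the diagonal plays no role in the cost), and (iii) the factor of $1/N$ appearing in equation~(\ref{Distance}) is absorbed into the scaling, matching the persistence-diagram Wasserstein distance to the measure Wasserstein distance exactly.

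First I would find a common denominator $N$ so that each $\alpha_i$ can be rewritten as $\alpha_i = \sum_{j=1}^N \tfrac{1}{N}\delta_{x_j^{(i)}}$ (with repetitions allowed). By equation~(\ref{Distance}),
\begin{align*}
W_p(\alpha_i,\alpha_l)^p = \min_{\sigma\in P(N)}\tfrac{1}{N}\sum_{j=1}^N d(x_j^{(i)},x_{\sigma(j)}^{(l)})^p
\end{align*}
for every pair $i,l$. Set $R := \max_{i,j}\|x_j^{(i)}\|_\infty$, pick a large constant $T > 0$ to be determined, and define $z_j^{(i)} := N^{-1/p}x_j^{(i)} + (0,T)$ and $f(\alpha_i) := \{z_1^{(i)},\dots,z_N^{(i)}\}$ as a multiset in the plane. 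Provided $T$ is taken large enough relative to $R$ and $N^{-1/p}$, every $z_j^{(i)}$ lies strictly above the diagonal, so $f(\alpha_i)$ is a valid persistence diagram. Since translations preserve $\ell^\infty$ distances and scaling by $N^{-1/p}$ multiplies $p$-th powers of distances by $1/N$, for every $\sigma\in P(N)$ we have
\begin{align*}
\sum_{j=1}^N d(z_j^{(i)},z_{\sigma(j)}^{(l)})^p = \tfrac{1}{N}\sum_{j=1}^N d(x_j^{(i)},x_{\sigma(j)}^{(l)})^p.
\end{align*}

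The key step is to verify that the infimum in $W_p(f(\alpha_i),f(\alpha_l))$ is attained by a full bijection. The $\ell^\infty$ distance from a translated point $z_j^{(i)}$ to $\Delta_{\mathbb{R}}$ grows linearly in $T$, whereas pairwise distances $d(z_j^{(i)},z_k^{(l)})$ remain bounded by $2R\cdot N^{-1/p}$, independently of $T$. Hence for $T$ sufficiently large,
\begin{align*}
d(z_j^{(i)},\Delta_{\mathbb{R}})^p + d(z_k^{(l)},\Delta_{\mathbb{R}})^p > d(z_j^{(i)},z_k^{(l)})^p
\end{align*}
for every choice of indices. Given any partial matching with nonempty unmatched portions (necessarily of equal size on both sides since partial matchings are bijective), one may pair up one unmatched point from each side; the inequality above shows this strictly decreases the cost, and iterating produces a strictly cheaper full bijection. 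Thus $W_p(f(\alpha_i),f(\alpha_l))^p = \min_{\sigma\in P(N)}\sum_j d(z_j^{(i)},z_{\sigma(j)}^{(l)})^p$, which together with the identity from the previous paragraph equals $W_p(\alpha_i,\alpha_l)^p$.

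The main obstacle is the argument in the third paragraph that the diagonal can be made irrelevant: one must confirm quantitatively that a single sufficiently large vertical shift $T$ simultaneously forces every optimal partial matching between every pair $f(\alpha_i),f(\alpha_l)$ to be a full bijection. Because there are only finitely many diagrams and their supports are bounded, a single choice of $T$ works for all pairs at once. Everything else is essentially bookkeeping combining the Birkhoff--von Neumann reformulation with the fact that a uniform translation and rescaling in the plane changes the relevant $\ell^\infty$ distances in the expected way.
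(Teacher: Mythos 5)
Your proof is correct and follows essentially the same route as the paper: rescale the supports to absorb the $\tfrac{1}{N}$ from the Birkhoff--von Neumann formulation \eqref{Distance}, translate far above the diagonal so that any optimal partial matching between the resulting diagrams is forced to be a perfect matching, and conclude the isometry. In fact your dilation factor $N^{-1/p}$ is the correct one for general $p$; the paper's proof writes the dilation as $\tfrac{1}{N}$, which makes the cost of a perfect matching $\tfrac{1}{N^p}\sum_j d(x_j,y_{\sigma(j)})^p$ rather than $\tfrac{1}{N}\sum_j d(x_j,y_{\sigma(j)})^p$ and hence gives an exact isometry only when $p=1$, so your version actually repairs a small slip in the published argument.
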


\begin{proof}
Let $N$ denote the common denominator of all coefficients in the measures $\alpha_1,\cdots, \alpha_n$. We write each $\alpha_i$ as a sum of uniformly weighted Dirac measures (possibly with duplication):
\begin{align*}
\alpha_i = \sum_{j=1}^N\frac{1}{N}\delta_{x_j^i}.
\end{align*}
Let $D$ denote the diameter of the set $\{\frac{1}{N}x_j^i\}_{i,j}$ in $\mathbb{R}^2$. Note that there exists an $x \in \mathbb{R}^2$ such that 
\begin{align*}
    \{\frac{1}{N}x_j^i+x\} \subset \{(a,b) \in \mathbb{R}^2 \mid a<b\} \text{and}\\
    d(\{\frac{1}{N}x_j^i+x\}, \Delta_{\mathbb{R}}) > 2D.
\end{align*}

Define $f\colon A \to \mathcal{D}$ by $\alpha_i \mapsto \{\frac{1}{N}x_j^i+x\}$. From \eqref{Distance} we have that
\begin{align*}
    (W_p(\alpha,\beta))^p = \min_{\sigma \in P(N)}\frac{1}{N}\sum_{j = 1}^N d(x_j', y_{\sigma(j)}')^p.
\end{align*}
Now, since the diagrams are sufficiently far from the diagonal, we must have that the distance between the diagrams $f(\alpha)$ and $f(\beta)$ is achieved by a perfect matching, which proves the result.
\end{proof}

\begin{proposition}\label{eqi}
Suppose $A = \{\alpha_1,\cdots, \alpha_n\} \subseteq \mathcal{P}_p(\mathbb{R}^2)$. Then for all $\epsilon>0$ there exists an $\epsilon$-quasi-isometric embedding $f\colon A \to (\mathcal{D},W_p)$.
\end{proposition}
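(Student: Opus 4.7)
The plan is to reduce to Lemma~\ref{isom} by approximating each $\alpha_i$ by a discrete rational measure. Concretely, given $\epsilon > 0$, I would produce discrete rational measures $\alpha_1', \ldots, \alpha_n' \in \mathcal{P}_p(\mathbb{R}^2)$ satisfying $W_p(\alpha_i, \alpha_i') < \epsilon/2$ for every $i$. By Lemma~\ref{isom}, there is an isometric embedding $g\colon\{\alpha_1',\ldots,\alpha_n'\} \to (\mathcal{D},W_p)$; setting $f(\alpha_i) := g(\alpha_i')$, the triangle inequality in $\mathcal{P}_p(\mathbb{R}^2)$ and in $(\mathcal{D}, W_p)$ yields
\begin{align*}
|W_p(f(\alpha_i), f(\alpha_j)) - W_p(\alpha_i, \alpha_j)| &= |W_p(\alpha_i', \alpha_j') - W_p(\alpha_i, \alpha_j)| \\
&\leq W_p(\alpha_i, \alpha_i') + W_p(\alpha_j, \alpha_j') < \epsilon,
\end{align*}
so $f$ is an $\epsilon$-quasi-isometric embedding, as required.

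The only technical content is thus the density of discrete rational measures in $\mathcal{P}_p(\mathbb{R}^2)$ with respect to $W_p$. I would carry this out in two steps. First, truncate $\alpha_i$ to a sufficiently large closed box $B \subset \mathbb{R}^2$ whose complement contributes negligible $p$-th moment (this is possible because $\alpha_i$ has finite $p$-th moment), then partition $B$ into finitely many cells of diameter at most $\delta$ and replace the restriction of $\alpha_i$ to each cell by a Dirac mass at a chosen representative point weighted by the cell's mass. Transporting each portion of $\alpha_i$ within its cell shows that the resulting discrete (but generally irrational) measure is within any prescribed $W_p$-distance of $\alpha_i$. Second, perturb the weights of this discrete measure to rationals summing to $1$ with sufficiently small total-variation error; since the support is contained in a box of diameter $R$, a total-variation perturbation of size $\eta$ changes $W_p$ by at most a constant multiple of $R\,\eta^{1/p}$, which can be made arbitrarily small.

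The main (if mild) obstacle is the density argument itself: writing the two-step approximation cleanly and tracking the dependence of the error on $\epsilon$, $\delta$, $R$, and $\eta$. Everything after that is a short application of Lemma~\ref{isom} and the triangle inequality, so the result will follow essentially once the density statement is rigorously in hand.
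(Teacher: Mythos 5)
Your proposal is correct and follows essentially the same route as the paper: approximate each $\alpha_i$ by a discrete rational measure within $W_p$-distance $\epsilon/2$, apply Lemma~\ref{isom} to the approximants, and conclude by the triangle inequality. The paper simply cites the density of discrete rational measures in $\mathcal{P}_p(\mathbb{R}^2)$ as a standard fact from optimal transport rather than proving it, whereas you sketch the truncation-and-discretization argument explicitly; either is acceptable.
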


\begin{proof}
Let $A = \{\alpha_1,\cdots, \alpha_n\} \subseteq \mathcal{P}_p(\mathbb{R}^2)$ and $\epsilon>0$ be given. It follows from standard results in optimal transport~\cite{villani} that there exists a collection $B = \{\beta_1,\cdots, \beta_n\} \subseteq \mathcal{P}_p(\mathbb{R}^2)$ of rational discrete measures such that $\forall_i d(\alpha_i,\beta_i)<\frac{\epsilon}{2}$.
By Lemma \ref{isom} there exists an isometry $f\colon B \to \mathcal{D}$. Define $\bar{f}\colon A \to \mathcal{D}$ by $\alpha_i \mapsto f(\beta_i)$. It is easy to check that this defines an $\epsilon$-quasi-isometry.
\end{proof}

\subsection{\redd{bi-Lipschitz embeddings for finite sets persistence diagrams}}

We now consider the other direction, namely embedding persistence diagrams into Wasserstein space. This direction requires more care, with the main obstacle being the existence of matchings to the diagonal. \redd{We give two separate constructions: one which results in a bi-Lipschitz embedding, and one which results in a $\varepsilon$-quasi-isometry for arbitrarily small $\varepsilon$. The former, which we deal with in this subsection, is enough to derive the results for coarse embeddings into Hilbert space.The latter construction, given in the next subsection, only holds for $p > 1$ but is necessary for results regarding snowflake universality. }

\redd{

\begin{lemma}\label{PDembedbi}
Suppose $D = \{D_1,D_2,\cdots, D_n\} \subset (\mathcal{D},W_p)$ is a finite subset of diagrams. Then there exists a map $f\colon D \to \mathcal{P}_p(\mathbb{R}^2)$ satisfying the following:
\begin{align}\label{eq1}
    \frac{1}{N^{1/p}}d(D_i,D_j) \leq d(f(D_i),f(D_j)) \leq \frac{2^{1/p}}{N^{1/p}} d(D_i,D_j)
\end{align}
where $N$ is the total number of points in diagrams in $D$, counted with multiplicity.
\end{lemma}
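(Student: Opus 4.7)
The plan is to embed each diagram $D_i$ as a uniform empirical measure supported on a multiset of common cardinality $N$, so that the permutation reformulation \eqref{Distance} becomes applicable. Writing $\pi\colon\mathbb{R}^2\to \Delta_{\mathbb{R}}$ for the closest-point projection (which is $1$-Lipschitz in the $\ell^\infty$ metric), I define $D_i'$ as the multiset obtained from $D_i$ by adjoining $\pi(q)$ for every $q\in \bigcup_{j\neq i} D_j$; then $|D_i'|=N$ for all $i$, and I set
\[ f(D_i) \;=\; \frac{1}{N}\sum_{p\in D_i'}\delta_p \;\in\; \mathcal{P}_p(\mathbb{R}^2). \]

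For the upper bound I would convert an optimal partial matching $(B_1,B_2,g)$ between $D_i$ and $D_j$ into a bijection $D_i'\to D_j'$ as follows: pair each $p\in B_1$ with $g(p)\in B_2$; pair each unmatched $p\in D_i\setminus B_1$ with its projection $\pi(p)\in D_j'$ (present because $i\neq j$) and symmetrically each $q\in D_j\setminus B_2$ with $\pi(q)\in D_i'$; match the common copies $\pi(D_k)$ for $k\notin\{i,j\}$ to themselves at zero cost; and pair the remaining projected copies $\pi(B_2)\subset D_i'$ and $\pi(B_1)\subset D_j'$ by sending $\pi(g(p))\mapsto \pi(p)$. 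Since $\pi$ is $1$-Lipschitz, the last batch contributes at most $\sum_{p\in B_1} d(p,g(p))^p$, so the total bijection cost is at most $2\,\text{cost}_p(g)^p$ and, after optimizing over $g$, one obtains $d(f(D_i),f(D_j))^p \leq \frac{2}{N}\,d(D_i,D_j)^p$.

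For the lower bound I would reverse the procedure: from an optimal permutation $\sigma\colon D_i'\to D_j'$ realizing $W_p(f(D_i),f(D_j))$, extract a partial matching on $D_i,D_j$ by keeping only those pairs $(p,\sigma(p))$ with $p\in D_i$ and $\sigma(p)\in D_j$. Every unmatched $p\in D_i\setminus B_1$ has $\sigma(p)\in\Delta_{\mathbb{R}}$ by construction, so $d(p,\Delta_{\mathbb{R}})\leq d(p,\sigma(p))$, and symmetrically for each $q\in D_j\setminus B_2$. The three resulting families of pairs (the kept pairs, the pairs indexed by $D_i\setminus B_1$, and the pairs indexed by $D_j\setminus B_2$) are disjoint subsets of the $N$ pairs of $\sigma$, so $\text{cost}_p(g)^p \leq N\cdot d(f(D_i),f(D_j))^p$, yielding the lower bound.

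The main obstacle is reconciling the partial-matching formalism (with its free matching to $\Delta_{\mathbb{R}}$ and variable total mass) with the coupling formalism for probability measures (which demands equal total mass and a full pairing). Padding each $D_i$ by projected copies of points from the other diagrams manufactures a pool of ``dummy'' diagonal points that simulate the role of the diagonal in the matching problem, and the substantive technical input is that pairing these dummies with one another via $\pi$ introduces only a constant factor $2^{1/p}$ of overhead rather than an unbounded one, thanks to the $1$-Lipschitz property of the diagonal projection.
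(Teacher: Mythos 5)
Your proposal is correct and follows essentially the same route as the paper: pad each diagram with the diagonal projections of all other diagrams' points to get multisets of common size $N$, bound the upper side by the same explicit bijection (matched pairs, unmatched-to-projection pairs, and projected copies paired via the $1$-Lipschitz projection, giving the factor $2$), and obtain the lower bound by extracting a partial matching from an optimal bijective coupling. The only differences are notational.
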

\begin{proof}
We begin by fixing some notation. Let $\pi_{ij}$ denote the optimal partial matching between $D_i$ and $D_j$, letting $\mathcal{U}_{ij}$ denote those points in $D_i$ that are unmatched under this partial matching. For any $x \in \mathbb{R}^2$, let $\rho_x$ be the closest point on the diagonal to $x$.  For each $i$, define the following multiset of size $N$ (counted with multiplicity):
$$
\overline{D_i} = D_i \cup \bigcup_{j \neq i} \rho(D_j)
$$
where $\rho(D_j) = \{\rho_x \mid x \in D_j\}$. Define $f\colon D \to \mathcal{P}_p(\mathbb{R}^2)$ by sending each $D_i$ to the uniform measure on $\overline{D_i}$. Fix $i$ and $j$ and define a coupling $\pi$ from $f(D_i)$ to $f(D_j)$ as follows:
$$
\pi = \frac{1}{N}  \left[ \sum_{x \in D_i \setminus \mathcal{U}_{ij}} \left( \delta_{(x,\pi_{ij}(x))} + \delta_{(\rho_{\pi_{ij}(x)}, \rho_x)} \right)
+ \sum_{x \in \mathcal{U}_{ij}} \delta_{(x,\rho_x)}
+ \sum_{x \in \mathcal{U}_{ji}} \delta_{(\rho_x, x)}
+ \sum_{x \in \overline{D_i} \setminus (D_i \cup D_j)} \delta_{(x,x)}
  \right]
$$
Note that the mapping $x \mapsto \rho_x$ is distance non-increasing. Since this is a coupling, we have
\begin{align*}
& d(f(D_i), f(D_j))^p  \\
\leq \ & \frac{1}{N} \left[ \sum_{x \in D_i \setminus \mathcal{U}_{ij}} \left( \|x-\pi_{ij}(x)\|_{\infty}^p + \|\rho_{\pi_{ij}(x)}-\rho_x\|_{\infty}^p \right)
+ \sum_{x \in \mathcal{U}_{ij}} \|x-\rho_x\|_{\infty}^p
+ \sum_{x \in \mathcal{U}_{ji}} \|\rho_x - x\|_{\infty}^p
  \right] \\
  \leq \ & \frac{1}{N} \left[ \sum_{x \in D_i \setminus \mathcal{U}_{ij}} 2 \|x-\pi_{ij}(x)\|_{\infty}^p
+ \sum_{x \in \mathcal{U}_{ij}} \|x-\rho_x\|_{\infty}^p
+ \sum_{x \in \mathcal{U}_{ji}} \|\rho_x - x\|_{\infty}^p
  \right] \\
  \leq \ & \frac{2}{N} d(D_i, D_j)^p
\end{align*}

For the lower bound,  note that any bijective coupling of uniform measures $f(D_i)$ and $f(D_j)$ induces a partial matching on the diagrams $D_i$ and $D_j$. Let $\pi$ denote the optimal coupling between $f(D_i)$ and $f(D_j)$ and let $\tilde{\pi}_{ij}$ denote the induced partial matching between diagrams $D_i$ and $D_j$. Further let $\tilde{\mathcal{U}}_{ij}$ denote those points unmatched under $\tilde{\pi}_{ij}$, which must therefore be matched by $\pi$ to something on the diagonal. Then we have, 
\begin{align*}
    d(D_i,D_j)^p &\leq \sum_{x \in D_i\setminus \mathcal{U}_{ij}}\|x-\pi_{ij}(x)\|_{\infty}^p + \sum_{x \in \mathcal{U}_{ij}}\|x -\rho_x\|_{\infty}^p +\sum_{x \in \mathcal{U}_{ji}} \|x -\rho_x\|_{\infty}^p\\
    &\leq \sum_{x \in D_i\setminus \mathcal{U}_{ij}}\|x-\pi(x)\|_{\infty}^p + \sum_{x \in \mathcal{U}_{ij}}\|x -\pi(x)\|_{\infty}^p + \sum_{x \in \mathcal{U}_{ji}} \|x -\pi^{-1}(x)\|_{\infty}^p  \\
    & \leq N d(f(D_i),f(D_j))^p
\end{align*}
\end{proof}
}

\begin{figure}[h]
    \centering
    \begin{tikzpicture}[scale=2]
    \begin{scope}
        \draw[->] (0,0)--(0,2);
        \draw[->] (0,0)--(2,0);
        \draw[dashed] (0,0)--(2,2);
        \node[anchor=center] at (1,1.5) {\color{blue} $\blacktriangle$};
        \node[anchor=center] at (0.2,1.4) {\color{blue} $\blacktriangle$};
        \node[anchor=center] at (0.3,0.7) {\color{orange} $\bullet$};
        \node[anchor=center] at (0.5,1.9) {\color{orange} $\bullet$};
        \node[anchor=center] at (1.5,1.9) {\color{orange} $\bullet$};
        \draw[|->] (2.5,1)--(3,1); 
    \end{scope}
    \begin{scope}[xshift=100]
        \draw[->] (0,0)--(0,2);
        \draw[->] (0,0)--(2,0);
        \node[anchor=center] at (1,1.5) {\color{blue} $\blacktriangle$};
        \node[anchor=center] at (0.2,1.4) {\color{blue} $\blacktriangle$};
        \node[anchor=center] at (0.3,0.7) {\color{orange} $\bullet$};
        \node[anchor=center] at (0.5,1.9) {\color{orange} $\bullet$};
        \node[anchor=center] at (1.5,1.9) {\color{orange} $\bullet$};
        \node[anchor=center] at (1.2,1.2) {\color{blue} $\blacktriangle$};
        \node[anchor=center] at (0.5,0.5) {\color{blue} $\blacktriangle$};
        \node[anchor=center] at (1.7,1.7) {\color{blue} $\blacktriangle$};
        \node[anchor=center] at (1.25,1.25) {\color{orange} $\bullet$};
        \node[anchor=center] at (0.8,0.8) {\color{orange} $\bullet$};
    \end{scope}
    \end{tikzpicture}
    \caption{Sketch of the construction in the proof Lemma \ref{PDembed} for two diagrams $\{D_1, D_2\}$. The two persistence diagrams on the left (blue triangles and orange circles resp.) are sent by $f$ to the uniform distributions shown on the right, whose supports have the same cardinality.}
    \label{fig:proofbi}
\end{figure}
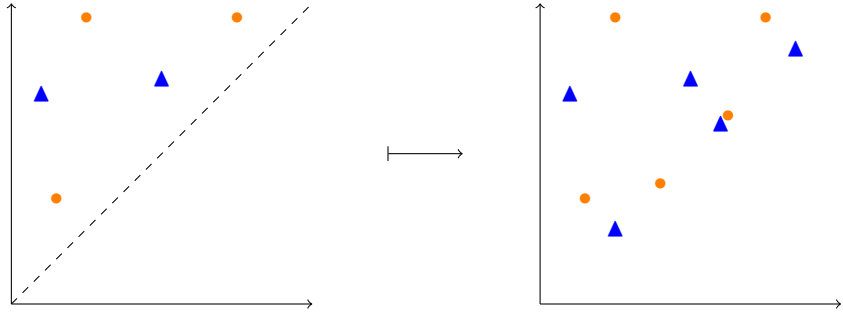

For a discrete measure $\alpha = \sum a_i\delta_{x_i}$ and a real number $r$ denote by $\lambda_r(\alpha)$ the dilated measure $\lambda_r(\alpha) = \sum a_i\delta_{rx_i}$. Note that by defining $\bar{f} = \lambda_{N^{1/p}} \circ f$ we may obtain a map on finite subsets of diagrams with the bounds contained in the following proposition.

\begin{proposition}\label{equipdbi}
Suppose $D = \{D_1,D_2,\cdots, D_n\} \subset (\mathcal{D},W_p)$ is a finite subset of diagrams. Then there exists an embedding $f\colon D \to \mathcal{P}_p(\mathbb{R}^2)$ such that  
$d(D_i,D_j) \leq d(f(D_i),f(D_j)) \leq 2^{1/p} d(D_i,D_j)$
\end{proposition}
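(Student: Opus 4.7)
The plan is to directly leverage Lemma \ref{PDembedbi} together with the suggested dilation. The key observation is that for any $r > 0$, the dilation $\lambda_r$ acts on measures by pushing forward under the map $x \mapsto rx$, so for any two discrete measures $\alpha, \beta \in \mathcal{P}_p(\mathbb{R}^2)$ one has $W_p(\lambda_r \alpha, \lambda_r \beta) = r \cdot W_p(\alpha, \beta)$. This follows immediately from the formulation \eqref{Distance} (or from the general definition, by pushing couplings forward under the dilation), since the underlying $\ell^\infty$ metric on $\mathbb{R}^2$ is homogeneous.

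First I would apply Lemma \ref{PDembedbi} to the finite subset $D = \{D_1, \ldots, D_n\}$ to obtain the map $f \colon D \to \mathcal{P}_p(\mathbb{R}^2)$ satisfying
\[
\frac{1}{N^{1/p}} d(D_i, D_j) \leq d(f(D_i), f(D_j)) \leq \frac{2^{1/p}}{N^{1/p}} d(D_i, D_j),
\]
where $N$ is the total number of points appearing in diagrams of $D$ (counted with multiplicity). Next, following the remark preceding the proposition, I would define $\bar f = \lambda_{N^{1/p}} \circ f$. By the scaling property of $W_p$ under dilation, each $p$-Wasserstein distance between images gets multiplied by the factor $N^{1/p}$.

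Multiplying the two-sided bound from the lemma by $N^{1/p}$ then yields exactly
\[
d(D_i, D_j) \leq d(\bar f(D_i), \bar f(D_j)) \leq 2^{1/p} d(D_i, D_j),
\]
which is the claim. Since $\bar f$ thus satisfies a strict lower bound, it is in particular injective, hence an embedding in the set-theoretic sense. There is no real obstacle here beyond verifying the scaling property of $W_p$ under the dilation $\lambda_r$, which is routine; the whole content of the proposition has been loaded into Lemma \ref{PDembedbi}, and this step merely rescales to remove the $N$-dependence.
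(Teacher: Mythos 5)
Your proposal is correct and follows exactly the route the paper takes: the paper derives Proposition \ref{equipdbi} from Lemma \ref{PDembedbi} by composing with the dilation $\lambda_{N^{1/p}}$ and using the homogeneity $W_p(\lambda_r\alpha,\lambda_r\beta)=r\,W_p(\alpha,\beta)$, precisely as you describe. No gaps.
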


\subsection{\redd{$\varepsilon$-Quasi isometric embeddings for finite sets of persistence diagrams ($p>1$)}}

Inspired by the explicit construction in \cite{andoni2018snowflake}, we now show how to construct an embedding of a finite subset of persistence diagrams into the space of discrete distributions by adding a \redd{sufficiently large number of} extra points along the diagonal. See Figure \ref{fig:proof} for an illustration of our construction.

\begin{lemma}\label{PDembed}
Suppose $D = \{D_1,D_2,\cdots, D_n\} \subset (\mathcal{D},W_p)$ is a finite subset of diagrams whose points all have multiplicity one. Let $p>1$, then for all $\epsilon >0$ there exists an $N \in \mathbb{N}$ and map $f\colon D \to \mathcal{P}_p(\mathbb{R}^2)$ satisfying the following for sufficiently large $s$:
\begin{align}\label{eq1}
    \frac{d(D_i,D_j)}{(N+s+1)^{1/p}} \leq d(f(D_i),f(D_j)) \leq \frac{d(D_i,D_j)}{(N+s+1)^{1/p}} + \frac{\epsilon}{(N+s+1)^{1/p}}
\end{align}
\end{lemma}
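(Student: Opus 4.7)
The plan is to extend the construction of Lemma~\ref{PDembedbi} by supplementing each auxiliary multiset with $s+1$ equally-spaced ``filler'' points on the diagonal. Translate so that every diagram lies in a fixed box whose diagonal projections span a segment $I$ of length $L$; let $Q_s \subset I$ consist of $s+1$ points at spacing $L/s$, and define
$$
\overline{D_i} = D_i \cup \bigcup_{j \neq i} \rho(D_j) \cup Q_s,
$$
a multiset of size $N+s+1$, where $N = \sum_i |D_i|$. Let $f(D_i)$ be the uniform measure on $\overline{D_i}$. The lower bound in the statement of the lemma follows by the same argument as in Lemma~\ref{PDembedbi}: by Birkhoff--von Neumann the optimal coupling is induced by a permutation $\sigma\colon\overline{D_i}\to\overline{D_j}$, which restricts to a partial matching on $(D_i,D_j)$; since $\overline{D_j}\setminus D_j \subseteq \Delta_\mathbb{R}$ and $\rho_u$ is the closest diagonal point to $u$, each term of the partial-matching cost is dominated by the corresponding term of the unnormalized coupling cost.

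For the upper bound, fix $(i,j)$, let $\pi_{ij}$ be an optimal partial matching with unmatched sets $\mathcal{U}_{ij},\mathcal{U}_{ji}$, and construct a permutation $\sigma\colon\overline{D_i}\to\overline{D_j}$ in three blocks: (i) realize $\pi_{ij}$ on matched pairs; (ii) send $\mathcal{U}_{ij}$ to the diagonal projections in $\overline{D_j}$ and symmetrically for $\mathcal{U}_{ji}$, and match the shared atoms $\bigcup_{k\neq i,j}\rho(D_k)$ identically; (iii) couple the remaining diagonal atoms --- namely, the shadows $\{\rho_{\pi_{ij}(x)}\}$ in $\overline{D_i}$, the shadows $\{\rho_x\}$ in $\overline{D_j}$, together with the $Q_s$ copies on each side --- via the monotone (sorted) rearrangement, which is optimal in one dimension for $p\ge 1$. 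Blocks (i) and (ii) together contribute exactly $d(D_i,D_j)^p$ to the unnormalized cost, so it suffices to show that block (iii) contributes at most $\epsilon^p$ once $s$ is sufficiently large.

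This is precisely where $p>1$ is indispensable. In the single-shadow case ($A = \{a\}\cup Q_s$ versus $B = \{b\}\cup Q_s$), sorting implements a ``bubble-shift'' through $Q_s$ with cost
$$
\sum_m |c_m - d_m|^p \;\le\; \bigl(|a-b|\,s/L + 2\bigr)(L/s)^p \;=\; O\bigl(|a-b|\,L^{p-1}/s^{p-1}\bigr),
$$
which tends to $0$ as $s\to\infty$ precisely when $p>1$. The mechanism extends to the $k$-shadow situation: the CDFs $F_A,F_B$ on $I$ differ by at most $k$ in value (they share $Q_s$) and the grid imposes density at least $s/L$, so the horizontal gap $|F_A^{-1}(t) - F_B^{-1}(t)|$ is controlled by $kL/s$; integrating gives block (iii) an unnormalized cost of $O(k^p L^p / s^{p-1})$. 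With $k \le N$ fixed, taking $s$ large makes this contribution $\le \epsilon^p$, and combining with $(x^p+y^p)^{1/p}\le x+y$ for $p \ge 1$ yields
$$
d(f(D_i),f(D_j)) \;\le\; \frac{d(D_i,D_j)+\epsilon}{(N+s+1)^{1/p}}.
$$

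The main obstacle is carrying out this 1D monotone-rearrangement estimate rigorously in the general $k$-shadow case without losing the $s^{-(p-1)}$ rate: shadow points from $\overline{D_i}$ and $\overline{D_j}$ may interleave with $Q_s$ in ways that make explicit disjoint ``bubble-paths'' inconvenient, so the cleanest route is to work directly with the inverse-CDF representation $W_p^p(\mu,\nu)=\int_0^1|F_\mu^{-1}-F_\nu^{-1}|^p\,dt$ and exploit the high density of the $Q_s$-grid to bound the horizontal distance between CDFs at each level.
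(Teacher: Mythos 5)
Your proof is correct and follows essentially the same strategy as the paper's: pad each diagram with $s+1$ equally spaced diagonal points, send matched pairs to each other and unmatched points to the diagonal, absorb the leftover diagonal atoms by a monotone rearrangement whose unnormalized cost is $O(s^{1-p})$ (which is where $p>1$ enters), and obtain the lower bound from induced partial matchings exactly as in Lemma~\ref{PDembedbi}. The only (harmless) difference is that you also retain the shadow atoms $\rho(D_j)$ from Lemma~\ref{PDembedbi}, so unmatched points map to their exact diagonal projections, whereas the paper omits these atoms and instead routes unmatched points to nearby grid points, paying an extra $N(M-m)/s$ per point via the triangle inequality.
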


\begin{proof}

Let $D$ be as above and let $\epsilon >0$ be given. We will map each diagram to a subset of the plane, such that these subsets all have the same cardinality. \redd{We recall some notation from the proof of Lemma \ref{PDembedbi}: let $\pi_{ij}$ denote the optimal partial matching between $D_i$ and $D_j$, $\mathcal{U}_{ij}$ denote those points in $D_i$ that are unmatched under this partial matching, and $\rho_x$ denote the closest point on the diagonal to $x \in \mathbb{R}^2$.} Let $N_i = |D_i|$, and $N = \max_i N_i$. Furthermore, define the following:
\begin{align*}
  M &= \max_{(x,y) \in \cup_iD_i} \frac{x+y}{2}\\
  m &= \min_{(x,y) \in \cup_iD_i} \frac{x+y}{2}\\
  I &= \{(m+\frac{t(M-m)}{s},m+\frac{t(M-m)}{s})\}_{t=0}^s\\
  I_i &=\{(m+\frac{t(M-m)}{s},m+\frac{t(M-m)}{s})\}_{t = s+1}^{s+N-N_i}.
\end{align*} 

Here $s > N$ is taken so large so that 

$$N \cdot \left(\frac{N(M-m)}{s}\right)^p < \frac{\epsilon^p}{3}\quad \text{and} \quad \forall_{i,j}\ (s+1+N-N_j-|\mathcal{U}_{ij}|)\left(\frac{N(M-m)}{s}\right)^p< \frac{\epsilon^p}{3},$$

which can be achieved if $p > 1$. We note that $|I| = s+1$ and $|I_i| = N-N_i$. Define $\bar{D}_i = D_i \cup I \cup I_i$ and note $|\bar{D}_i| = N_i+N-N_i+s+1 = N+s+1$ for all $i$. Finally, define $f\colon D \to \mathcal{P}_p(\mathbb{R}^2)$ by sending each $D_i$ to the uniform measure on $\bar{D_i}$. We first check that $f$ has the desired upper bound. Let $\sigma_{ij}: \mathcal{U}_{ij} \to \sigma_{ij}(\mathcal{U}_{ij})$ be an optimal matching from $\mathcal{U}_{ij}$ to a subset of $I$
and $\tau_{ji}$ denote an optimal matching from $\mathcal{U}_{ji}$ to a (possibly different) subset of $I$. Now, note that,
\begin{align*}
|I\cup I_i \setminus \tau_{ji}(\mathcal{U}_{ji})| &=s+1+N-N_i-|\mathcal{U}_{ji}|\\
&= s+1+N-N_j-|\mathcal{U}_{ij}|\\
&= |I\cup I_j\setminus \sigma_{ij}(\mathcal{U}_{ij})|.
\end{align*} 
 Assume that the sets $I\cup I_i \setminus \tau_{ji}(\mathcal{U}_{ji})$ and $I\cup I_j\setminus \sigma_{ij}(\mathcal{U}_{ij})$ have been ordered as $\{x_i\}$ and $\{y_i\}$ respectively, with ascending coordinates, and let $\omega$ denote the bijection which sends $x_i$ to $y_i$. We define a coupling, $\pi$, between the uniform measures $f(D_i)$ and $f(D_j)$ by, 
 \begin{align*}
    \pi = \frac{1}{N+s+1}\left[ \sum_{x \in D_i\setminus \mathcal{U}_{ij}}\delta_{(x,\pi_{ij}(x))} + \sum_{x \in \mathcal{U}_{ij}}\delta_{(x,\sigma_{ij}(x))} +\sum_{x \in \tau_{ji}(\mathcal{U}_{ji})} \delta_{(x,\tau_{ji}^{-1}(x))} + \sum_{x \in I\cup I_i \setminus \tau_{ji}(\mathcal{U}_{ji})} \delta_{(x,\omega(x))}    \right].
 \end{align*}

With $x = \left(m+\frac{t_1(M-m)}{s},m+\frac{t_1(M-m)}{s}\right)$ and $\omega(x) = \left(m+\frac{t_2(M-m)}{s},m+\frac{t_2(M-m)}{s}\right)$ we note that, 
\begin{align*}
\|x-\omega(x)\|_{\infty}
&= \frac{|t_1-t_2|(M-m)}{s}\\
&\leq \frac{N(M-m)}{s}.
\end{align*}

Thus, 
\begin{align*}
    \sum_{x \in I\cup I_i\setminus \tau_{ji}(\mathcal{U}_{ij})} \|x-\omega(x)\|_{\infty}^p \leq  (s+1+N-N_j-|\mathcal{U}_{ij}|)\left( \frac{N(M-m)}{s} \right)^p < \frac{\epsilon^p}{3}.
\end{align*}

Since $\pi$ is a coupling between $f(D_i)$ and $f(D_j)$,

\begin{align*}
    & d(f(D_i),f(D_j)) & \\
    \leq & \left( \int_{\mathbb{R}^2\times \mathbb{R}^2}\|x-y\|_{\infty}^pd\pi(x,y)\right)^{\frac{1}{p}}\\
 =&\frac{1}{(N+s+1)^{1/p}}\left[ \sum_{x \in D_i\setminus   
    \mathcal{U}_{ij}}\|x-\pi_{ij}(x)\|_{\infty}^p + 
    \sum_{x \in \mathcal{U}_{ij}}\|x -\sigma_{ij}(x)\|_{\infty}^p +\right.\\
        &\sum_{x \in \tau_{ji}(\mathcal{U}_{ji})} \|x -\tau_{ji}^{-1}(x)\|_{\infty}^p + \sum_{x \in I\cup I_i \setminus \tau_{ji}(\mathcal{U}_{ji})} \|x - \omega(x)\|_{\infty}^p    \left.\vphantom{\sum_{x \in D_i\setminus   
    \mathcal{U}_{ij}}}\right]^{\frac{1}{p}}\\
\leq&\frac{1}{(N+s+1)^{1/p}}\left[ \sum_{x \in D_i\setminus   
    \mathcal{U}_{ij}}\|x-\pi_{ij}(x)\|_{\infty}^p + 
    \sum_{x \in \mathcal{U}_{ij}}\left[\|x-\rho_x\|_{\infty}+\|\rho_x-\sigma_{ij}(x)\|_{\infty}\right]^p +\right.\\
        &\sum_{x \in \tau_{ji}(\mathcal{U}_{ji})}\left[\|x-\rho_{\tau_{ji}^{-1}(x)}\|_{\infty}+\|\rho_{\tau_{ji}^{-1}(x)}-\tau_{ji}^{-1}(x)\|_{\infty}\right]^p + \sum_{x \in I\cup I_i \setminus \tau_{ji}(\mathcal{U}_{ji})} \|x - \omega(x)\|_{\infty}^p    \left.\vphantom{\sum_{x \in D_i\setminus   
    \mathcal{U}_{ij}}}\right]^{\frac{1}{p}}\\
\leq&\frac{1}{(N+s+1)^{1/p}}\left[ \sum_{x \in D_i\setminus   
    \mathcal{U}_{ij}}\|x-\pi_{ij}(x)\|_{\infty}^p + \sum_{x \in \mathcal{U}_{ij}}\|x-\rho_x\|_{\infty}^p+\sum_{x \in \tau_{ji}^{-1}(\mathcal{U}_{ji})} \|x-\rho_x\|_{\infty}^p\right]^{\frac{1}{p}} +\\
    &\frac{1}{(N+s+1)^{1/p}}\left[\sum_{x \in \mathcal{U}_{ij}}\|\rho_x-\sigma_{ij}(x)\|_{\infty}^p+\sum_{x \in \mathcal{U}_{ji}} \|\rho_x-\tau_{ji}(x)\|_{\infty}^p+ \sum_{x \in I\cup I_i \setminus \mathcal{U}_{ij}} \|x - \omega(x)\|_{\infty}^p \right]^{\frac{1}{p}}\\
    \leq & \frac{1}{(N+s+1)^{1/p}}\left[d(D_i,D_j) + \epsilon\right].
\end{align*}
where the last step is obtained by noting that $\rho_x$ and $\sigma_{ij}(x)$ are at most $\frac{N(M-m)}{s}$ apart, and applying our choice of $\epsilon$. This completes the proof of the upper bound. 

\redd{The proof of the lower bound is the same as in the proof of Lemma \ref{PDembedbi}}.

\end{proof}

\begin{figure}[h]
    \centering
    \begin{tikzpicture}[scale=2]
    \begin{scope}
        \draw[->] (0,0)--(0,2);
        \draw[->] (0,0)--(2,0);
        \draw[dashed] (0,0)--(2,2);
        \node[anchor=center] at (1,1.5) {\color{blue} $\blacktriangle$};
        \node[anchor=center] at (0.2,1.4) {\color{blue} $\blacktriangle$};
        \node[anchor=center] at (0.3,0.7) {\color{orange} $\bullet$};
        \node[anchor=center] at (0.5,1.9) {\color{orange} $\bullet$};
        \node[anchor=center] at (1.5,1.9) {\color{orange} $\bullet$};
        \draw[|->] (2.5,1)--(3,1); 
    \end{scope}
    \begin{scope}[xshift=100]
        \draw[->] (0,0)--(0,2);
        \draw[->] (0,0)--(2,0);
        \node[anchor=center] at (1,1.5) {\color{blue} $\blacktriangle$};
        \node[anchor=center] at (0.2,1.4) {\color{blue} $\blacktriangle$};
        \node[anchor=center] at (0.3,0.7) {\color{orange} $\bullet$};
        \node[anchor=center] at (0.5,1.9) {\color{orange} $\bullet$};
        \node[anchor=center] at (1.5,1.9) {\color{orange} $\bullet$};
        \foreach \i in {5,...,18}{
            \node at (\i/10, \i/10) {\color{blue} $\blacktriangle$};
        }
        \foreach \i in {5,...,17}{
            \node at (\i/10, \i/10) {\color{orange} $\bullet$};
        }
    \end{scope}
    \end{tikzpicture}
    \caption{Sketch of the construction in the proof Lemma \ref{PDembedbi} for two diagrams $\{D_1, D_2\}$. The two persistence diagrams on the left (blue triangles and orange circles resp.) are sent by $f$ to the uniform distributions shown on the right, whose supports have the same cardinality. }
    \label{fig:proof}
\end{figure}
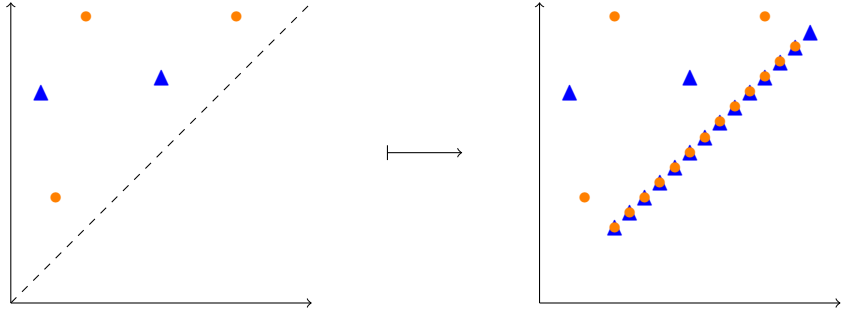

For any persistence diagram there exists a diagram whose points have multiplicity one and which is arbitrarily close to the original diagram. Thus, one may relax the restriction that each diagram has multiplicity one. This, together with dilating measures using $\lambda_{(N+s+1)^{1/p}}$, results in the following form of Lemma \ref{PDembed} which will be more useful. 

\begin{proposition}\label{eqipd}
Suppose $D = \{D_1,D_2,\cdots, D_n\} \subset (\mathcal{D},W_p)$ is a finite subset of diagrams and let $p>1$. Then for all $\epsilon >0$ there exists an $\epsilon$-quasi-isometric embedding $f\colon D \to \mathcal{P}_p(\mathbb{R}^2)$.
\end{proposition}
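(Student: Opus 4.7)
The plan is to reduce the statement to Lemma \ref{PDembed} via a standard perturbation argument followed by a dilation. Since Lemma \ref{PDembed} already produces a map whose distortion from isometry is at most $\epsilon/(N+s+1)^{1/p}$ on finite sets of multiplicity-one diagrams, the remaining work is to remove the multiplicity-one restriction and rescale so that the ambient bounds on $W_p(f(D_i), f(D_j))$ match $W_p(D_i, D_j)$ up to an additive $\epsilon$.

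First, given the finite set $D = \{D_1, \ldots, D_n\}$ and a target precision $\epsilon > 0$, I would produce diagrams $D_i'$ whose points have multiplicity one and which satisfy $W_p(D_i, D_i') < \epsilon/3$ for every $i$. This can be done by replacing each repeated point of $D_i$ with a cluster of distinct nearby points; any sufficiently small perturbation yields a partial matching between $D_i$ and $D_i'$ which is the identity except on the duplicated points, so the cost can be made arbitrarily small. Then Lemma \ref{PDembed}, applied to $\{D_1', \ldots, D_n'\}$ with precision $\epsilon/3$, produces an integer $N$, a sufficiently large $s$, and a map $g \colon \{D_1', \ldots, D_n'\} \to \mathcal{P}_p(\mathbb{R}^2)$ satisfying
\begin{equation*}
\frac{W_p(D_i', D_j')}{(N+s+1)^{1/p}} \;\leq\; W_p(g(D_i'), g(D_j')) \;\leq\; \frac{W_p(D_i', D_j') + \epsilon/3}{(N+s+1)^{1/p}}.
\end{equation*}

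Next, I would post-compose with the dilation $\lambda_{(N+s+1)^{1/p}}$. Since dilating the support of a measure by a factor $r$ scales Wasserstein distances by exactly $r$ (any coupling transforms to a coupling at $r^p$ times the cost), the map $\bar{g} = \lambda_{(N+s+1)^{1/p}} \circ g$ satisfies
\begin{equation*}
W_p(D_i', D_j') \;\leq\; W_p(\bar{g}(D_i'), \bar{g}(D_j')) \;\leq\; W_p(D_i', D_j') + \epsilon/3.
\end{equation*}
Finally, define $f \colon D \to \mathcal{P}_p(\mathbb{R}^2)$ by $f(D_i) = \bar{g}(D_i')$. Two applications of the triangle inequality in $(\mathcal{D}, W_p)$, together with $W_p(D_i, D_i') < \epsilon/3$, give $|W_p(D_i,D_j) - W_p(D_i', D_j')| < 2\epsilon/3$, which combined with the previous display yields
\begin{equation*}
W_p(D_i, D_j) - \epsilon \;\leq\; W_p(f(D_i), f(D_j)) \;\leq\; W_p(D_i, D_j) + \epsilon,
\end{equation*}
so $f$ is an $\epsilon$-quasi-isometric embedding.

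I do not anticipate a serious obstacle here because Lemma \ref{PDembed} has absorbed all the genuine difficulty (in particular, the careful handling of unmatched points and the matching to points along the diagonal that requires $p > 1$). The only point to check with a little care is that the perturbation $D_i \rightsquigarrow D_i'$ can indeed be made in $W_p$-norm rather than, say, $W_\infty$-norm, but this follows because a perturbation of each point by distance at most $\delta$ produces a partial matching of $p$-cost at most $\delta \cdot (\sum_i |D_i|)^{1/p}$, which is still arbitrarily small.
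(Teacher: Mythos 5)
Your proposal is correct and follows exactly the route the paper itself takes (the paper only sketches it in the paragraph preceding Proposition~\ref{eqipd}): perturb each diagram to one with multiplicity-one points at $W_p$-cost less than a fraction of $\epsilon$, apply Lemma~\ref{PDembed}, post-compose with the dilation $\lambda_{(N+s+1)^{1/p}}$, and finish with the triangle inequality. Your additional check that the perturbation can be controlled in $W_p$ (cost at most $\delta$ times the $p$-th root of the number of points) is a detail the paper leaves implicit, and it is right.
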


\section{\redd{Main results}}

 \redd{We are now ready to prove our main results concerning coarse embeddability and snowflake universality of persistence diagrams and Wasserstein space. Together, Proposition~\ref{equipdbi}, and Theorem~\ref{nowak} give the following result, which connects the embeddability questions for persistence diagrams and Wasserstein space.

\begin{theorem}\label{A}
For any $p \in [1,\infty)$, $(\mathcal{D},W_p)$ coarsely embeds into Hilbert space if and only if $\mathcal{P}_p(\mathbb{R}^2)$ coarsely embeds into Hilbert space. 
\end{theorem}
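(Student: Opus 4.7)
The plan is to apply Nowak's characterization (Theorem~\ref{nowak}) to both spaces, using the two directions of finite-subset embeddings established in the previous section to transfer uniform coarse control from one space to the other. The key observation is that both Proposition~\ref{eqi} and Proposition~\ref{equipdbi} produce embeddings whose distortion bounds do \emph{not} depend on the finite subset chosen, only on a user-supplied parameter; this is exactly what is needed to compose with Nowak-type Hilbert space embeddings and still get uniform $\rho_{-},\rho_{+}$ at the end.

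For the direction $\mathcal{P}_p(\mathbb{R}^2) \text{ coarsely embeds} \Longrightarrow (\mathcal{D},W_p) \text{ coarsely embeds}$, I would fix arbitrary finite $D \subseteq \mathcal{D}$ and apply Proposition~\ref{equipdbi} to obtain $f_D\colon D\to \mathcal{P}_p(\mathbb{R}^2)$ with $d(D_i,D_j)\leq d(f_D(D_i),f_D(D_j))\leq 2^{1/p}d(D_i,D_j)$. By Theorem~\ref{nowak} applied to $\mathcal{P}_p(\mathbb{R}^2)$ (which, by assumption, coarsely embeds), there are fixed $\rho_-,\rho_+$ and a map $g_{f_D(D)}\colon f_D(D)\to \ell_2$ controlling distances in terms of these $\rho$'s on the finite set $f_D(D)$. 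The composition $g_{f_D(D)}\circ f_D$ then satisfies
\[
\rho_-(d(D_i,D_j))\leq \|g_{f_D(D)}(f_D(D_i))-g_{f_D(D)}(f_D(D_j))\|\leq \rho_+(2^{1/p} d(D_i,D_j)),
\]
and since these bounds depend only on $\rho_\pm$ and $p$, not on $D$, Nowak's criterion applied in reverse gives a coarse embedding of $(\mathcal{D},W_p)$ into Hilbert space.

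For the converse direction, the same strategy works but with Proposition~\ref{eqi} in place of Proposition~\ref{equipdbi}, and one must handle the additive $\epsilon$ error carefully. Given finite $A\subseteq \mathcal{P}_p(\mathbb{R}^2)$, I would fix $\epsilon=1$ (a uniform choice is crucial) and apply Proposition~\ref{eqi} to get an $\epsilon$-quasi-isometric embedding $f_A\colon A\to (\mathcal{D},W_p)$. Composing with the Nowak maps provided by the hypothesis on $(\mathcal{D},W_p)$ yields bounds of the form $\tilde\rho_-(d(\alpha,\beta))\leq \|\cdot\|\leq \tilde\rho_+(d(\alpha,\beta))$ where
\[
\tilde\rho_-(t)=\rho_-(\max(t-1,0)),\qquad \tilde\rho_+(t)=\rho_+(t+1).
\]
Both are nondecreasing, $\tilde\rho_-\to\infty$, and they are independent of $A$, so Nowak's theorem produces the desired coarse embedding of $\mathcal{P}_p(\mathbb{R}^2)$.

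The argument is essentially a diagram chase through Nowak's theorem, and I do not anticipate a serious obstacle; the only subtle point is making sure the $\epsilon$-quasi-isometry in Proposition~\ref{eqi} does not cause the $\rho_-$ function to collapse. Fixing $\epsilon$ in advance (rather than letting it vary with $A$) and shifting $\rho_-$ by a constant takes care of this cleanly, after which the Hilbert-space property transfers in both directions.
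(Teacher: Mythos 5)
Your proposal is correct and takes essentially the same route as the paper: both directions go through Nowak's finite-subset characterization (Theorem~\ref{nowak}), using Proposition~\ref{equipdbi} to push finite sets of diagrams into $\mathcal{P}_p(\mathbb{R}^2)$ and Proposition~\ref{eqi} (with a fixed $\epsilon$) for the reverse, then composing with the Hilbert-space maps. Your write-up is in fact more explicit than the paper's about why the control functions remain uniform after composition, which is the only point requiring care.
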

\begin{proof}
Suppose $(\mathcal{D},W_p)$ coarsely embeds into Hilbert space. By Proposition~\ref{equipdbi}, every finite subset of $\mathcal{P}_p(\mathbb{R}^2)$  coarsely embeds (uniformly) into $(\mathcal{D},W_p)$, so by composing embeddings we obtain that every finite subset of $\mathcal{P}_p(\mathbb{R}^2)$ coarsely embeds (uniformly) into Hilbert space. By Theorem~\ref{nowak}, it follows that $\mathcal{P}_p(\mathbb{R}^2)$ itself coarsely embeds into Hilbert space. The converse is similar.
\end{proof}
}

The following corollary follows from Theorems \ref{waginf}, \ref{wagp}, and \ref{A}. We note that a direct proof is also possible adapting techniques from Wagner~\cite{wagner2021nonembeddability}.

\begin{corollary}
    The space $\mathcal{P}_p(\mathbb{R}^2)$ does not admit a coarse embedding into a Hilbert space if $p>2$.
\end{corollary}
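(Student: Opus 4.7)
The plan is to derive this corollary immediately from Theorem~\ref{A} combined with Wagner's non-embeddability result (Theorem~\ref{wagp}). Theorem~\ref{wagp} asserts that $(\mathcal{D},W_p)$ fails to coarsely embed into Hilbert space for $2<p<\infty$, and Theorem~\ref{A} asserts that coarse embeddability of $(\mathcal{D},W_p)$ into Hilbert space is equivalent to coarse embeddability of $\mathcal{P}_p(\mathbb{R}^2)$ into Hilbert space. Chaining these two facts gives the claim.

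To write the proof explicitly, I would argue by contrapositive. Suppose, for the sake of contradiction, that for some $p>2$ the space $\mathcal{P}_p(\mathbb{R}^2)$ admits a coarse embedding into a Hilbert space. Applying Theorem~\ref{A}, this forces $(\mathcal{D},W_p)$ to also admit a coarse embedding into a Hilbert space. But this directly contradicts Theorem~\ref{wagp}, and therefore no such embedding of $\mathcal{P}_p(\mathbb{R}^2)$ can exist.

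There is essentially no obstacle to overcome in the corollary itself: all of the real work is performed by Theorem~\ref{A} (whose proof in turn invokes Proposition~\ref{equipdbi} and Nowak's finite-subset characterization in Theorem~\ref{nowak}), together with Wagner's theorem. The one small point worth flagging is that the corollary's range $p>2$ must be interpreted as $2<p<\infty$, since $\mathcal{P}_p(\mathbb{R}^2)$ is defined in this paper only for finite $p$; this matches exactly the hypotheses of Theorem~\ref{wagp}. As the authors note, a direct argument is also available by adapting Wagner's method to the Wasserstein setting — such an argument would presumably transfer the obstructing finite subsets in $(\mathcal{D},W_p)$ to corresponding finite subsets of measures via the map constructed in Proposition~\ref{equipdbi} or \ref{eqipd}, and would thus replicate much of the content of Theorem~\ref{A} in a special case.
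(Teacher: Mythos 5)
Your proposal is correct and follows exactly the paper's intended argument: the corollary is stated to follow from Theorem~\ref{A} together with Wagner's Theorem~\ref{wagp} (the paper also cites Theorem~\ref{waginf}, but that is not needed for finite $p>2$). Your remark about interpreting $p>2$ as $2<p<\infty$ is consistent with the paper's definition of $\mathcal{P}_p$.
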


Recall that Andoni-Naor-Neiman showed that $\mathcal{P}_p(\mathbb{R}^3)$ is $\frac{1}{p}$-snowflake universal for $p > 1$, from which it follows that $\mathcal{P}_p(\mathbb{R}^3)$ does not admit a coarse embedding into any Banach space of non-trivial type for $p > 1$~\cite{andoni2018snowflake}. Whether this holds for $\mathcal{P}_p(\mathbb{R}^2)$, however, is still open. In addition to coarse embeddabilty, we obtain a result which connects the questions of snowflake universality for $\mathcal{P}_p(\mathbb{R}^2)$ and $(\mathcal{D}, W_p)$.  

\begin{lemma}\label{snow}
Let $X$ and $Y$ be metric spaces and suppose for all $\epsilon>0$ and all finite subsets $A \subset X$ there exists a map $f\colon A \to Y$ satisfying
\begin{align}\label{eqi1}
    d(x_i,x_j)-\epsilon \leq d(f(x_i),f(x_j)) \leq d(x_i,x_j)+\epsilon
\end{align}
for all $x_i,x_j \in A$. Then $X$ being $\theta$-snowflake universal implies $Y$ is $\theta$-snowflake universal. 
\begin{proof}
Let $W$ be the $\theta$-snowflake of some finite metric space. Let $D = 1+\epsilon > 1$ be given
and take $\delta<1$ so small so that $\frac{1+\delta}{1-\delta}<1+\epsilon$. Take $\epsilon_1<\frac{\delta}{2}$. Then by assumption there exists a map $g\colon W \to X$ and a constant $k_1$ such that,
\begin{align*}
    k_1d(x_i,x_j)\leq d(g(x_i),g(x_j))\leq k_1(1+\epsilon_1)d(x_i,x_j).
\end{align*}
Let $M = \min \{d(x_i,x_j) \mid x_i, x_j \in W,\ x_i \neq x_j\}$ and take $\epsilon_2 < \frac{\delta k_1M}{2}$. Let $f\colon g(W) \to Y$ be a map satisfying \eqref{eqi1} with respect to $\epsilon_2$. We claim that $f\circ g: W \to Y$ has distortion $D$. Indeed,
\begin{align*}
    d(f(g(x_i)),f(g(x_j))) &\leq d(g(x_i),g(x_j))+\epsilon_2\\
    &\leq k_1(1+\epsilon_1)d(x_i,x_j)+\epsilon_2\\
    &\leq k_1(1+\delta)d(x_i,x_j)\\
    &\leq k_1(1-\delta)(1+\epsilon)d(x_i,x_j).
\end{align*}
Further, 
\begin{align*}
    d(f(g(x_i)),f(g(x_j))) &\geq d(g(x_i),g(x_j))-\epsilon_2\\
    &\geq k_1d(x_i,x_j)-\epsilon_2\\
    &= k_1d(x_i.x_j)\left[1-\frac{\epsilon_2}{k_1d(x_i,x_j)}\right]\\
    &\geq k_1(1-\delta)d(x_i,x_j)
\end{align*}
\end{proof}
\end{lemma}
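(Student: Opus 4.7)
The plan is to compose two embeddings: first use $\theta$-snowflake universality of $X$ to embed the snowflaked finite metric space $W$ into $X$ with nearly multiplicative isometry, then apply the hypothesis to transfer the finite image into $Y$ with small additive error. The main difficulty will be mediating between the multiplicative distortion in the first step and the additive slack in the second.

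Fix a finite metric space $(Z, d_Z)$ and set $W = (Z, d_Z^\theta)$. Given a target distortion $1 + \varepsilon > 1$, I would first select a small parameter $\delta \in (0,1)$ with $(1+\delta)/(1-\delta) < 1 + \varepsilon$. Using $\theta$-snowflake universality of $X$, pick a map $g\colon W \to X$ and scale $k_1 > 0$ satisfying
\[
k_1 d_W(w,w') \leq d_X(g(w), g(w')) \leq k_1(1+\varepsilon_1)d_W(w,w')
\]
with $\varepsilon_1 < \delta/2$. Let $M := \min\{d_W(w,w') \mid w \neq w'\}$, which is positive since $W$ is finite. Then apply the hypothesis to the finite subset $g(W) \subset X$ to obtain $f\colon g(W) \to Y$ with additive slack $\varepsilon_2$, choosing $\varepsilon_2 < \delta k_1 M / 2$.

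The candidate embedding is the composition $f \circ g\colon W \to Y$. The upper bound yields $d_Y(f(g(w)), f(g(w'))) \leq k_1(1+\varepsilon_1)d_W(w,w') + \varepsilon_2 \leq k_1(1+\delta)d_W(w,w')$, where the last inequality absorbs $\varepsilon_2$ using $d_W(w,w') \geq M$ and $\varepsilon_1 < \delta/2$. The lower bound similarly gives $d_Y(f(g(w)), f(g(w'))) \geq k_1 d_W(w,w') - \varepsilon_2 \geq k_1(1-\delta)d_W(w,w')$. Dividing shows the distortion of $f \circ g$ is at most $(1+\delta)/(1-\delta) < 1 + \varepsilon$, and since $\varepsilon$ was arbitrary this witnesses $c_Y(W) = 1$.

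The main obstacle is the conversion of additive error (from the quasi-isometric embedding $f$) into multiplicative distortion. This is resolved by using finiteness of $W$: because there are only finitely many pairwise distances, they admit a uniform positive lower bound $M$, and by scaling $\varepsilon_2$ against $k_1 M$ the additive perturbation becomes a small \emph{multiplicative} perturbation uniformly. No analogous maneuver would be available without finiteness, which is precisely why the hypothesis is imposed only on finite subsets of $X$ and why snowflake universality is itself formulated as a condition on embeddings of finite metric spaces.
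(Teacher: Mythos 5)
Your proposal is correct and follows essentially the same argument as the paper: compose the near-isometric embedding $g$ from snowflake universality with the additive-error map $f$ on the finite image, and use the minimum nonzero distance $M$ in $W$ to convert the additive slack $\epsilon_2$ into a multiplicative factor, yielding distortion at most $(1+\delta)/(1-\delta) < 1+\epsilon$. The only cosmetic difference is that you divide the two bounds to read off the distortion directly, while the paper rewrites the upper bound as $k_1(1-\delta)(1+\epsilon)d(x_i,x_j)$ to exhibit the scale factor explicitly.
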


Lemma \ref{snow} with Propositions \ref{eqi} and \ref{eqipd} yield the following.

\begin{theorem}\label{B}
If $\mathcal{P}_p(\mathbb{R}^2)$ is $\frac{1}{p}$-snowflake universal, for $1 \leq  p < \infty$, then so is $(\mathcal{D},W_p)$. The converse holds if $p  > 1$.
\end{theorem}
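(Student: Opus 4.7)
The plan is to obtain both directions of Theorem \ref{B} as direct applications of Lemma \ref{snow}, using the two $\varepsilon$-quasi-isometric embedding results (Propositions \ref{eqi} and \ref{eqipd}) from the previous section. The key observation is that Lemma \ref{snow} is a general transfer principle: whenever one can realize arbitrarily good $\varepsilon$-quasi-isometric embeddings of all finite subsets of $X$ into $Y$, $\theta$-snowflake universality passes from $X$ to $Y$. So once the finite-subset embedding theory is in hand, nothing new needs to be done.

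For the forward direction, which is to hold for all $1 \leq p < \infty$, I would apply Lemma \ref{snow} with $X = \mathcal{P}_p(\mathbb{R}^2)$ and $Y = (\mathcal{D}, W_p)$ and $\theta = 1/p$. The hypothesis of Lemma \ref{snow} — that for every $\varepsilon > 0$ and every finite $A \subset X$ there is a map $A \to Y$ satisfying \eqref{eqi1} — is precisely the statement of Proposition \ref{eqi}. So $\frac{1}{p}$-snowflake universality of $\mathcal{P}_p(\mathbb{R}^2)$ implies $\frac{1}{p}$-snowflake universality of $(\mathcal{D}, W_p)$.

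For the converse, I would swap the roles, taking $X = (\mathcal{D}, W_p)$ and $Y = \mathcal{P}_p(\mathbb{R}^2)$, and invoke Proposition \ref{eqipd} in place of Proposition \ref{eqi}. This again furnishes the needed $\varepsilon$-quasi-isometric embeddings of finite subsets, but only under the assumption $p > 1$ (since Proposition \ref{eqipd} requires $p > 1$ for the diagonal-padding construction to close up the $\varepsilon$ gap). This is the source of the asymmetry in the statement of Theorem \ref{B}: the coarser bi-Lipschitz embedding of Proposition \ref{equipdbi} would be enough for coarse-embeddability transfer, but not for snowflake universality, where one needs distortion converging to $1$.

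There is essentially no obstacle beyond invoking the two propositions and Lemma \ref{snow} in the right order. The only point worth flagging is that snowflake universality demands distortion exactly one, so it is critical that $\varepsilon$ in Propositions \ref{eqi} and \ref{eqipd} be arbitrary; Lemma \ref{snow} already absorbs the bookkeeping needed to combine a $(1+\varepsilon_1)$-distortion embedding of a snowflake with an $\varepsilon_2$-quasi-isometric embedding and obtain overall distortion $(1+\varepsilon)$ on $W$. The proof therefore amounts to two one-line invocations of Lemma \ref{snow}.
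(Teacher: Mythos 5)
Your proposal is correct and is exactly the paper's argument: the paper derives Theorem \ref{B} by combining Lemma \ref{snow} with Proposition \ref{eqi} (for the forward direction, valid for all $1 \leq p < \infty$) and Proposition \ref{eqipd} (for the converse, which requires $p > 1$). Your identification of the roles of $X$ and $Y$ in each direction and of the source of the asymmetry matches the paper precisely.
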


\section{Concluding remarks}

Wasserstein space and the space of persistence diagrams have many similarities, especially when viewing persistence diagrams as discrete distributions; this similarity has been explored elsewhere using partial optimal transport~\cite{divol2021understanding}. The main difference between the spaces is the presence of the diagonal as a sink for unmatched points. Our results suggest that this difference does not affect the coarse embeddability of these spaces. \redd{Obstructions to coarse embeddability into Hilbert space developed in either case will therefore work just as well for the other.  Snowflake universality is stronger than coarse non-embeddability into Hilbert space, but for} $p=1$, our construction degenerates in a similar way to that in \cite{andoni2018snowflake} and so we do not obtain an equivalence. The $p=1$ case is important since it appears in many stability results for vectorizations~\cite{adams2017persistence,chung2022persistence}. Note that stability is one half of coarse (or bi-Lipschitz) embeddability, as it bounds the distortion in Hilbert space in terms of the distance between diagrams. Our hope is that our results motivate the use of techniques from optimal transport (such as those in \cite{andoni2018snowflake}) to resolve the still open question of coarse embeddability of persistence diagrams for $1 \leq p \leq 2$.

\section*{Statements and Declarations}
\textbf{Competing interests}. The authors have no competing interests to declare that are relevant to the content of this article. 

\textbf{Data availability}. Data sharing not applicable to this article as no datasets were generated or analysed during the current study.

\bibliographystyle{plain}
\bibliography{references}

\begin{thebibliography}{10}

\bibitem{adams2017persistence}
Henry Adams, Tegan Emerson, Michael Kirby, Rachel Neville, Chris Peterson,
  Patrick Shipman, Sofya Chepushtanova, Eric Hanson, Francis Motta, and Lori
  Ziegelmeier.
\newblock Persistence images: A stable vector representation of persistent
  homology.
\newblock {\em The Journal of Machine Learning Research}, 18(1):218--252, 2017.

\bibitem{andoni2018snowflake}
Alexandr Andoni, Assaf Naor, and Ofer Neiman.
\newblock Snowflake universality of {W}asserstein spaces.
\newblock In {\em Annales Scientifiques de l'Ecole Normale Superieure},
  volume~51, pages 657--700. Societe Mathematique de France, 2018.

\bibitem{bell2021space}
Greg Bell, Austin Lawson, Neil Pritchard, and Dan Yasaki.
\newblock The space of persistence diagrams fails to have {Yu’s} {Property}
  {A}.
\newblock In {\em Topology Proceedings}, volume~58, pages 279--288, 2021.

\bibitem{bubenik2015statistical}
Peter Bubenik.
\newblock Statistical topological data analysis using persistence landscapes.
\newblock {\em The Journal of Machine Learning Research}, 16(1):77--102, 2015.

\bibitem{bubenik2020embeddings}
Peter Bubenik and Alexander Wagner.
\newblock Embeddings of persistence diagrams into {Hilbert} spaces.
\newblock {\em Journal of Applied and Computational Topology}, 4(3):339--351,
  2020.

\bibitem{carriere2018metric}
Mathieu Carri{\`e}re and Ulrich Bauer.
\newblock On the metric distortion of embedding persistence diagrams into
  separable hilbert spaces.
\newblock In {\em 35th International Symposium on Computational Geometry (SoCG
  2019)}. Schloss Dagstuhl-Leibniz-Zentrum fuer Informatik, 2019.

\bibitem{chung2020smooth}
Yu-Min Chung, Michael Hull, and Austin Lawson.
\newblock Smooth summaries of persistence diagrams and texture classification.
\newblock In {\em Proceedings of the IEEE/CVF Conference on Computer Vision and
  Pattern Recognition Workshops}, pages 840--841, 2020.

\bibitem{chung2022persistence}
Yu-Min Chung and Austin Lawson.
\newblock Persistence curves: A canonical framework for summarizing persistence
  diagrams.
\newblock {\em Advances in Computational Mathematics}, 48(1):6, 2022.

\bibitem{divol2021understanding}
Vincent Divol and Th{\'e}o Lacombe.
\newblock Understanding the topology and the geometry of the space of
  persistence diagrams via optimal partial transport.
\newblock {\em Journal of Applied and Computational Topology}, 5:1--53, 2021.

\bibitem{feng2021persistent}
Michelle Feng and Mason~A Porter.
\newblock Persistent homology of geospatial data: A case study with voting.
\newblock {\em SIAM Review}, 63(1):67--99, 2021.

\bibitem{mitra2021space}
Atish Mitra and {\v{Z}}iga Virk.
\newblock The space of persistence diagrams on n points coarsely embeds into
  {Hilbert} space.
\newblock {\em Proceedings of the American Mathematical Society},
  149(6):2693--2703, 2021.

\bibitem{nowak2005coarse}
Piotr Nowak.
\newblock Coarse embeddings of metric spaces into {Banach} spaces.
\newblock {\em Proceedings of the American Mathematical Society},
  133(9):2589--2596, 2005.

\bibitem{otter2017roadmap}
Nina Otter, Mason~A Porter, Ulrike Tillmann, Peter Grindrod, and Heather~A
  Harrington.
\newblock A roadmap for the computation of persistent homology.
\newblock {\em EPJ Data Science}, 6:1--38, 2017.

\bibitem{peyre2019computational}
Gabriel Peyr{\'e}, Marco Cuturi, et~al.
\newblock Computational optimal transport: With applications to data science.
\newblock {\em Foundations and Trends{\textregistered} in Machine Learning},
  11(5-6):355--607, 2019.

\bibitem{seversky2016time}
Lee~M Seversky, Shelby Davis, and Matthew Berger.
\newblock On time-series topological data analysis: New data and opportunities.
\newblock In {\em Proceedings of the IEEE conference on computer vision and
  pattern recognition workshops}, pages 59--67, 2016.

\bibitem{turner2014frechet}
Katharine Turner, Yuriy Mileyko, Sayan Mukherjee, and John Harer.
\newblock Fr{\'e}chet means for distributions of persistence diagrams.
\newblock {\em Discrete \& Computational Geometry}, 52(1):44--70, 2014.

\bibitem{villani}
C{\'e}dric Villani.
\newblock {\em Optimal transport: old and new}, volume 338.
\newblock Springer, 2009.

\bibitem{wagner2021nonembeddability}
Alexander Wagner.
\newblock Nonembeddability of persistence diagrams with $p > 2$ wasserstein
  metric.
\newblock {\em Proceedings of the American Mathematical Society},
  149(6):2673--2677, 2021.

\bibitem{yu2000coarse}
Guoliang Yu.
\newblock The coarse {B}aum--{C}onnes conjecture for spaces which admit a
  uniform embedding into {Hilbert} space.
\newblock {\em Inventiones mathematicae}, 139(1):201--240, 2000.

\end{thebibliography}

\end{document}